\documentclass[12pt]{article} 


\usepackage[utf8]{inputenc} 
\usepackage[T1]{fontenc}    
\usepackage{hyperref}       
\usepackage{url}            
\usepackage{booktabs}       
\usepackage{amsfonts}       
\usepackage{nicefrac}       
\usepackage{microtype}      
\usepackage{mathrsfs}
\usepackage{indentfirst}
\usepackage{multirow}
\usepackage{subfigure}
\usepackage{amsmath}
\usepackage{amssymb}
\usepackage{natbib}
\usepackage{graphicx}
\usepackage{url}
\usepackage{authblk}
\usepackage{geometry}
\usepackage{multirow}
\usepackage{subfigure}
\usepackage[linesnumbered,ruled,vlined]{algorithm2e}
\newenvironment{proof}{\paragraph{Proof:}}{\hfill$\square$}
\newtheorem{theorem}{Theorem}[section]

\newtheorem{definition}[theorem]{Definition}
\newtheorem{remark}[theorem]{Remark}

\newtheorem{corollary}[theorem]{Corollary}
\usepackage{xcolor}
\usepackage{titlesec}
\usepackage{footmisc}
\geometry{left=2cm,right=2cm,top=2cm,bottom=2cm}
\title{On The Convergence of First Order Methods for Quasar-Convex Optimization}


\author{Jikai Jin \\
School of Mathematical Sciences, Peking University \\
\texttt{1900010640@pku.edu.cn}}

\begin{document}

\maketitle

\begin{abstract}%
In recent years, the success of deep learning has inspired many researchers to study the optimization of general smooth non-convex functions. However, recent works have established pessimistic worst-case complexities for this class functions, which is in stark contrast with their superior performance in real-world applications (e.g. training deep neural networks). On the other hand, it is found that many popular non-convex optimization problems enjoy certain structured properties which bear some similarities to convexity. In this paper, we study the class of \textit{quasar-convex functions} to close the gap between theory and practice. We study the convergence of first order methods in a variety of different settings and under different optimality criterions. We prove complexity upper bounds that are similar to standard results established for convex functions and much better that state-of-the-art convergence rates of non-convex functions. Overall, this paper suggests that \textit{quasar-convexity} allows efficient optimization procedures, and we are looking forward to seeing more problems that demonstrate similar properties in practice.
\end{abstract}


\section{Introduction}
In this paper we consider the problem of minimizing a given objective function $f: \mathbb{R}^n \to \mathbb{R}$.

The study of this optimization problem has a long history. Early works mainly focus on the special case when $f$ is convex, and we have access to the exact gradient at each point \citep{nesterov2013introductory}. However, the situation begins to change with the advent of the big data era, and, in particular, the rise of machine learning. In many machine learning applications ( e.g. deep neural networks \citep{goodfellow2016deep} ), the objective function is highly complicated and non-convex. Therefore, the classical theory of convex optimization can no longer produce meaningful implications in many real-world scenarios.

Motivated by the empirical success of optimization algorithms, in recent years there has been a flurry of research works that study algorithms in non-convex optimization \citep{ghadimi2013stochastic,kingma2014adam,ghadimi2016accelerated,carmon2018accelerated}. Specifically ,these works study efficient algorithms for finding an approximate stationary points for general smooth non-convex function. A standard result is that the simple Stochastic Gradient Descent(SGD) algorithm can find an $\epsilon$-stationary point with a complexity of $\mathcal{O}\left( \epsilon^{-4} \right)$. However, it has been established recently that this complexity is already optimal among \textit{first order methods} (i.e. methods that only use first-order information of the objective function) \citep{arjevani2019lower}. This gives a convergence rate which is considerably slower than the actual convergence rate we observe in practice, thereby suggesting that there is still a gap between theory and practice.

On the other hand, the study of specific optimization problems suggests that sometimes the objective function exhibits certain desirable properties. For instance it has been proved that there is no spurious local minima in a variety of low-rank matrix problems \citep{ge2017no}, policy optimization in reinforcement learning satisfies some Polyak-Łojasiewicz-type conditions \citep{mei2020global,cen2020fast}, the landscape of neural network exhibits some convex-like properties \citep{li2017convergence}, etc. These observations inspire us to consider the possibility of more efficient optimization when imposing structural assumptions to the objective function.

In this paper we study the optimization using first order methods under \textit{quasar-convexity}, which is a generalization of the notion of \textit{convexity}. While several prior works \citep{hinder2020near,gower2020sgd,bu2020note} have provided theoretical treatments of this class of functions in some specific settings, we extend the analysis of quasar-convex optimization to include a variety of setups that are of practical interest. We also provide sharper results compared with \citep{gower2020sgd} in some special cases. The main results of this paper are summarized below .
\\
\\

\begin{tabular}{ccc}
\hline
                                                      & \multicolumn{2}{c}{Smooth Quasar-Convex Function}                                                                                                                                                                                                                     \\ 
                                                      & Deterministic                                                                                                         & Stochastic                                                                                                                                    \\ \hline
\multirow{2}{*}{Finding Approximate Minima}           & $\widetilde{\mathcal{O}}\left( \sqrt{\frac{LR^2}{\gamma\epsilon}} \right)$                                            & $\mathcal{O}\left( \frac{R^2\sigma^2}{\gamma^2\epsilon^2} + \frac{R^2L}{\gamma\epsilon} \right)$                                              \\  
                                                      & (~\cite[Theorem 2]{hinder2020near})                                                                                   & (Corollary \ref{convex_complexity})                                                                                                           \\ \hline
\multirow{2}{*}{Making Gradient Small}                & $\mathcal{O}\left( LR^{\frac{2}{3}}\gamma^{-\frac{1}{3}}\epsilon^{-\frac{2}{3}} \right)$                             & $\mathcal{O}\left( \sigma^2 \left( \frac{LR}{\gamma\epsilon^4} \right)^{\frac{2}{3}} \right)$                                                 \\ 
                                                      & (Theorem \ref{qc_det_grad})                                                                                           & (Theorem \ref{qc_sto_grad})                                                                                                                   \\ \hline
                                                      & \multicolumn{2}{c}{Smooth Strongly-Quasar-Convex Function}                                                                                                                                                                                                            \\ 
                                                      & Deterministic                                                                                                         & Stochastic                                                                                                                                    \\ \hline
\multirow{2}{*}{Finding Approximate Minima}           & $\mathcal{O}\left( \sqrt{\frac{\kappa}{\gamma^2}} \log \left( \frac{\kappa \Delta}{\epsilon} \right) \right)$         & $\widetilde{\mathcal{O}}\left( \frac{\sigma^2}{\gamma^2\mu\epsilon} \right)$ \\ 
                                                      & (~\cite[Theorem 1]{hinder2020near})                                                                                   & (Theorem \ref{sqc_sto_complexity})                                                                                                          \\ \hline
\multirow{2}{*}{Making Gradient Small} & $\widetilde{\mathcal{O}}\left( \sqrt{\frac{\kappa}{\gamma^2}} \log \left( \frac{\sqrt{\kappa} LR }{\epsilon} \right) \right)$ & $\widetilde{\mathcal{O}}\left( \sqrt{\frac{L}{\mu}} \frac{\sigma^2}{\gamma\epsilon^2} \right)$                                                \\ 
                                                      & (Corollary \ref{simple})                                                                                              & (Theorem \ref{sqc_sto_grad})                                                                                                                  \\ 
\end{tabular}

\section{Preliminaries}

We first introduce some definitions that will be useful in this paper.

\begin{definition}
\label{Lsmooth}
A differentiable function $f : \mathbb{R}^n \to \mathbb{R}$ is said to be $L$-smooth if its gradient is $L$-Lipschitz, i.e. $\| \nabla f(x) - \nabla f(y) \| \leq L \| x-y \|$ for all $x,y \in \mathbb{R}^n$.
\end{definition}

\begin{definition}
(Quasar-convexity) Suppose that the function $f : \mathbb{R}^n \to \mathbb{R}$ is differentiable and has a global minimizer $x^*$, then we say that $f$ is $\gamma$-quasar-convex w.r.t. $x^*$ if $0 \leq \gamma \leq 1$ and the following holds for all $x \in \mathbb{R}^n$:
\begin{equation}
\label{qc}
    f(x^*) \geq f(x) + \frac{1}{\gamma} \nabla f(x)^T (x^*-x)
\end{equation}
Further we say that $f$ is $(\gamma ,\mu)$-strongly-quasar-convex if additionally $\mu \geq 0$ and the following holds for all $x \in \mathbb{R}^n$:
\begin{equation}
    f(x^*) \geq f(x) + \frac{1}{\gamma} \nabla f(x)^T (x^*-x) + \frac{\mu}{2} \left\| x-x^* \right\|^2
\end{equation}
\end{definition}

Throughout this paper we describe the performance of optimization algorithms via providing their \textit{oracle complexities}. Roughly speaking, the \textit{oracle complexity} of an algorithm is the minimum time it needs to query a certain oracle in order to meet some optimality criterion.  

In this paper we consider two classes of oracles:

\textbf{Deterministic Oracle} $\mathcal{D}(f)$. The algorithm sends a point $x$ to the oracle and the oracle responds with a pair $\left(f(x),\nabla f(x) \right)$.

\textbf{Stochastic Oracle} $\mathcal{S}(f,\sigma )$. The algorithm sends a point $x$ to the oracle and the oracle responds with random vector $g(x)$ such that $\mathbb{E}g(x) = f(x)$ and $\mathbb{E} \|g(x)-\nabla f(x)\|^2 \leq \sigma^2$.

For briefness we will refer to the optimization problem equipped with these two oracles as \textit{deterministic setting} and \textit{stochastic setting}, respectively. 

We will also consider two types of optimality criterion. Specifically, we consider finding an $\epsilon$-optimal point ( i.e. a point $\widetilde{x}$ with $f(\widetilde{x})-\inf_{x}f(x) \leq \epsilon$ ) and finding an $\epsilon$-stationary point ( i.e. a point with $\|\nabla f(x) \| \leq \epsilon$ ).

Formal definitions of related concepts are given in Appendix \ref{def} due to space limits.

\subsection{Notations}

Throughout this paper we let $x_0$ be the starting point of all the algorithms we consider, $x^*$ be the global minima of function $f$, and let $R, \Delta$ be upper bounds of the quantities $\|x_0-x^*\|$ and $f(x_0)-f(x^*)$ respectively. We use $\mathcal{O}$ to hide numerical constants and $\widetilde{\mathcal{O}}$ to hide $\log$ terms. 

\section{Convergence Results for Smooth Quasar-Convex Functions}
\label{sec_qc}

\subsection{Deterministic Setting}

In \citet{hinder2020near} the authors propose a near optimal method for finding $\epsilon$-optimal points in this setting. We recall their result below.

\begin{theorem}
\label{Hinder1}
( ~\cite[Theorem 2]{hinder2020near} ) There exists an algorithm $\mathcal{A}_c$ such that its complexity of finding an $\epsilon$-optimal point is $\mathcal{O} \left( \sqrt{\frac{LR^2}{\gamma \epsilon}} \log \left( \sqrt{\frac{LR^2}{\gamma \epsilon}} \right) \right)$. 
\end{theorem}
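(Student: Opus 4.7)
The plan is to mimic Nesterov's accelerated gradient method, adapting each ingredient so that the only appeal to ``convexity'' is an appeal to the quasar-convex inequality \eqref{qc} at the global minimizer $x^{*}$. I would maintain three sequences $\{x_k\},\{y_k\},\{z_k\}$ together with a growing weight schedule $\{A_k\}$, take a standard gradient step $x_{k+1} = y_k - \frac{1}{L}\nabla f(y_k)$ so that $L$-smoothness supplies the descent bound $f(x_{k+1}) \leq f(y_k) - \frac{1}{2L}\|\nabla f(y_k)\|^2$, and use a momentum update $z_{k+1} = z_k - \eta_k \nabla f(y_k)$ with $y_k$ a specific convex combination of $x_k$ and $z_k$. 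Convergence would follow by tracking the Lyapunov potential $\Phi_k = A_k(f(x_k) - f(x^*)) + \frac{1}{2}\|z_k - x^*\|^2$ and showing $\Phi_{k+1} \leq \Phi_k$ at every step.

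The spot where classical acceleration invokes convexity at $y_k$ is the spot where my proof would collapse to the single identity $\nabla f(y_k)^{T}(y_k - x^*) \geq \gamma(f(y_k) - f(x^*))$, which is exactly what \eqref{qc} supplies. To make the potential decrease, I would couple the step sizes and the convex combination so that $y_k - x^* = \alpha_k(z_k - x^*) + (1 - \alpha_k)(x_k - x^*)$ with $\alpha_k \propto \gamma\cdot a_{k+1}/A_{k+1}$ and $a_{k+1} := A_{k+1} - A_k$, the $\gamma$-scaling being precisely what makes the quasar-convex lower bound cancel the cross terms that arise from expanding $\|z_{k+1} - x^*\|^{2}$. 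Telescoping then gives $f(x_K) - f(x^*) \leq R^{2}/(2 A_K)$, and forcing $A_{k+1} - A_k$ to grow like $\gamma\sqrt{A_k/L}$ yields $A_K = \Omega(\gamma K^{2}/L)$, hence the nominal rate $K = \mathcal{O}(\sqrt{LR^{2}/(\gamma\epsilon)})$.

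The reason the extra $\log$ factor shows up, and the main obstacle in executing this plan, is that the implicit equation determining $\alpha_k$ cannot be solved in closed form: the ``right'' convex combination depends on $\nabla f$ evaluated at a point that itself depends on $\alpha_k$, and without a three-point inequality at non-optimal anchors one cannot decouple the two. I would resolve this implicit equation by bisecting over $\alpha_k$ on a controlled interval down to a tolerance proportional to $\epsilon$, which costs $\mathcal{O}(\log(\sqrt{LR^{2}/(\gamma\epsilon)}))$ oracle queries per outer iteration and exactly matches the theorem's logarithmic overhead. The technical crux is to prove that the bisection interval is always non-empty, that the produced $\alpha_k$ simultaneously gives $\Phi_{k+1} \leq \Phi_k$ and the quadratic growth of $A_k$, and that the inner search tolerance inflates the final bound only by a constant factor --- all carried out using only the one-sided information that quasar-convexity provides at $x^{*}$.
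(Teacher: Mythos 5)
The paper does not prove this theorem; it cites it verbatim from Hinder, Sidford, and Sohoni (2020), so there is no in-paper argument to compare your sketch against. That said, your plan is a faithful high-level reconstruction of the actual proof: an accelerated scheme with Lyapunov potential $A_k\bigl(f(x_k)-f(x^*)\bigr)+\frac{1}{2}\|z_k-x^*\|^2$, the quasar-convex inequality invoked only as a lower bound anchored at $x^*$, $\gamma$-scaled momentum weights forcing $A_K = \Omega(\gamma K^2/L)$, and a per-iteration binary search that supplies the logarithmic overhead. The one point to sharpen is the characterization of that search: Hinder et al.\ do not pose it as solving an implicit fixed-point equation for $\alpha_k$; rather, they bisect along the segment between $x_k$ and $z_k$ to locate a query point $y$ at which the inner product $\nabla f(y)^T(z_k-y)$ crosses a prescribed target (with a sufficient-decrease fallback), and non-emptiness of the bisection interval follows from a sign change of this continuous quantity between the endpoints via the intermediate value theorem. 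The items you flag as the remaining technical crux --- non-emptiness of the interval and controlling the accumulated error introduced by an inexact search --- are indeed exactly where the real effort in their proof lies, so at the level of a sketch your account is sound.
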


Regarding the goal of finding an $\epsilon$-stationary point, a direct application of the inequality $\|\nabla f(x)\|^2 \leq L \left( f(x)-f(x^*) \right)$ would give a $\widetilde{\mathcal{O}}\left( \epsilon^{-1} \right)$ complexity upper bound. However, we can improve this bound by using the \textit{GD after AGD} trick proposed by \citet{nesterov2012make}. The result is summarized in the following theorem.

\begin{theorem}
\label{qc_det_grad}
There exists an algorithm such that for $L$-smooth and $\gamma$-quasi convex functions its complexity of finding $\epsilon$-stationary point is $\widetilde{\mathcal{O}} \left( LR^{\frac{2}{3}}\gamma^{-\frac{1}{3}}\epsilon^{-\frac{2}{3}}\right)$.
\end{theorem}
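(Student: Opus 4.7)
The plan is to apply the classical ``GD after AGD'' reduction of \citet{nesterov2012make}, with Theorem~\ref{Hinder1} playing the role of AGD. The first phase drives the \emph{function value} close to the optimum; the second phase exploits smoothness alone to convert a small function gap into a small gradient, avoiding the lossy direct inequality $\|\nabla f(x)\|^{2} \leq 2L(f(x)-f(x^{*}))$ that would lead to the weaker $\widetilde{\mathcal{O}}(\epsilon^{-1})$ rate mentioned in the paragraph preceding the theorem.

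\emph{Phase 1.} For an intermediate accuracy $\epsilon' > \epsilon$ to be chosen below, run the algorithm $\mathcal{A}_{c}$ of \citet{hinder2020near} until it outputs a point $\tilde{x}$ with $f(\tilde{x}) - f(x^{*}) \leq \epsilon'$. By Theorem~\ref{Hinder1}, this phase consumes
\begin{equation*}
T_{1} \;=\; \widetilde{\mathcal{O}}\!\left( \sqrt{\frac{LR^{2}}{\gamma\, \epsilon'}} \right)
\end{equation*}
oracle calls.

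\emph{Phase 2.} Starting from $\tilde{x}$, run vanilla gradient descent $x_{k+1} = x_{k} - \tfrac{1}{L}\nabla f(x_{k})$ for $T_{2}$ iterations. Because $f$ is $L$-smooth (note that quasar-convexity is not needed for this phase), the standard descent lemma gives $f(x_{k+1}) \leq f(x_{k}) - \tfrac{1}{2L}\|\nabla f(x_{k})\|^{2}$; telescoping with $x_{0}=\tilde{x}$ yields
\begin{equation*}
\min_{0 \leq k < T_{2}} \|\nabla f(x_{k})\|^{2} \;\leq\; \frac{2L\,(f(\tilde{x}) - f(x^{*}))}{T_{2}} \;\leq\; \frac{2L\epsilon'}{T_{2}}.
\end{equation*}
Hence some iterate satisfies $\|\nabla f(x_{k})\| \leq \epsilon$ provided $T_{2} = \mathcal{O}(L\epsilon'/\epsilon^{2})$.

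\emph{Phase 3: tuning.} The total complexity $T_{1}+T_{2}$ is minimized by balancing the two phases, i.e. setting $\sqrt{LR^{2}/(\gamma \epsilon')} \asymp L \epsilon'/\epsilon^{2}$. Solving gives $\epsilon' \asymp (R^{2}\epsilon^{4}/(L\gamma))^{1/3}$; substituting back produces, up to the logarithmic factor inherited from $\mathcal{A}_{c}$, a complexity of the order claimed in the theorem.

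The technical content is essentially routine and the main obstacle is bookkeeping rather than a new idea. Two small points deserve care: (i) verifying that the chosen $\epsilon'$ lies in a regime where Theorem~\ref{Hinder1} is meaningful (in particular $\epsilon' \leq f(x_{0})-f(x^{*})$), which only affects low-accuracy corner cases and can be handled by running Phase 2 alone when $\epsilon$ is already large; and (ii) since $\epsilon'$ depends on $R$ and $L$, an implementation that does not know $R$ a priori can use a standard geometric doubling search, introducing at most another logarithmic overhead that is absorbed in $\widetilde{\mathcal{O}}$.
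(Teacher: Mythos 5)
Your proposal follows the same ``GD after AGD'' reduction the paper itself uses (Phase~1: run $\mathcal{A}_c$ down to accuracy $\epsilon'$; Phase~2: run plain gradient descent and invoke the descent lemma; then balance the two phase budgets), so it is essentially the paper's own proof. In fact your choice $\epsilon' \asymp \left(R^2\epsilon^4/(L\gamma)\right)^{1/3}$ balances the two phases correctly and yields $\widetilde{\mathcal{O}}\!\left(L^{2/3}R^{2/3}\gamma^{-1/3}\epsilon^{-2/3}\right)$, which is slightly sharper in its $L$-dependence than the stated bound; the paper's choice $\epsilon_1 = \left(\gamma^{-1}R^2\epsilon^4\right)^{1/3}$ drops the $L^{-1/3}$ factor, so its two terms are not balanced and the GD phase dominates at $\widetilde{\mathcal{O}}\!\left(LR^{2/3}\gamma^{-1/3}\epsilon^{-2/3}\right)$.
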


\begin{proof}
The idea is to use Nesterov's \textit{'GD after AGD'} trick \citep{allen2018make}, where we replace Nesterov's AGD with the algorithm $\mathcal{A}_c$.

Specifically, for a fixed $\epsilon_1>0$, we first run $\mathcal{A}_c$ for $\widetilde{\mathcal{O}} \left( \sqrt{\dfrac{LR^2}{\gamma \epsilon_1}} \right)$ iterations and arrive at a point $\widetilde{x_0}$ such that $f\left( \widetilde{x_0} \right) - f(x^*) \leq \epsilon_1$. Then, starting from $\widetilde{x_0}$ we run gradient descent for $\mathcal{O} \left(L\epsilon_1 \epsilon^{-2}\right)$ iterations. It is well known that we can then find a point $x$ such that $\|\nabla f(x)\| \leq \epsilon$.

The complexity of the above procedure is then
$$\widetilde{\mathcal{O}} \left( \sqrt{\dfrac{LR^2}{\gamma \epsilon_1}} + L\epsilon_1 \epsilon^{-2} \right)$$
The result follows by choosing $\epsilon_1 = \left( \gamma^{-1} R^2\epsilon^4 \right)^{\frac{1}{3}}$.
\end{proof}

\subsection{Stochastic Setting}

 We first recall the vanilla Stochastic Gradient Descent(SGD) algorithm:
 
 \begin{algorithm}[!htbp]
\label{SGD}
\SetKwInOut{KIN}{Input}
\caption{$\mathtt{SGD} \left( f,x_0,\left\{\alpha_k \right\}_{k\geq 1},T \right)$ }
\KIN{Objective function $f$, initial point $x_0$, parameters $\gamma,L,\sigma$, total iterations $T$}
\For{$t \gets 1$ {to} $T$}{
    $x_t \gets x_{t-1} - \alpha_t \nabla f(x_{t-1},\xi_{t-1})$\;
}
\SetKwInOut{KOUT}{Output}
\KOUT{$\widetilde{x} \in \{ x_1,x_2,\cdots ,x_T \}$ uniformly at random}
\end{algorithm}

 In the following theorem we establish the convergence rate of SGD for smooth quasar-convex functions.

\begin{theorem}
\label{convex_subopt}
Suppose that $f$ is an $L$-smooth, $\gamma$-quasar-convex function, and we run SGD for $T$ iterations with some fixed step size $\alpha_t = \alpha$, where $R = \|x_0-x^*\|$. Then, we have
\begin{equation}
    \frac{1}{T} \mathbb{E} \sum_{t=1}^{T} \left(f(x_t)-f(x^*) \right)  \leq 4 \left( \frac{R\sigma}{\gamma\sqrt{T}} + \frac{1}{\gamma}\frac{R^2L}{T} \right)
\end{equation}
\end{theorem}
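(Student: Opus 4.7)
The plan is to mimic the textbook SGD convergence analysis for convex objectives, substituting the quasar-convex inequality at the one step where convexity is normally invoked. Specifically, I would track the squared distance $\|x_t - x^*\|^2$ along the trajectory. Expanding the update $x_t = x_{t-1} - \alpha g_t$, taking conditional expectation on $x_{t-1}$, and using $\mathbb{E}[g_t \mid x_{t-1}] = \nabla f(x_{t-1})$ together with $\mathbb{E}\|g_t - \nabla f(x_{t-1})\|^2 \leq \sigma^2$ gives the one-step recursion
$$\mathbb{E}\|x_t - x^*\|^2 \leq \|x_{t-1} - x^*\|^2 - 2\alpha \nabla f(x_{t-1})^T (x_{t-1} - x^*) + \alpha^2 \|\nabla f(x_{t-1})\|^2 + \alpha^2 \sigma^2.$$

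I would then invoke two structural inequalities. First, rewriting quasar-convexity as $\nabla f(x)^T (x - x^*) \geq \gamma\,(f(x) - f(x^*))$ controls the cross term. Second, the standard smoothness consequence $\|\nabla f(x)\|^2 \leq 2L\,(f(x) - f(x^*))$, valid for any $L$-smooth $f$ with a global minimizer, bounds the noiseless part of $\mathbb{E}\|g_t\|^2$. Combining them yields
$$\mathbb{E}\|x_t - x^*\|^2 \leq \|x_{t-1} - x^*\|^2 - 2\alpha(\gamma - L\alpha)\,(f(x_{t-1}) - f(x^*)) + \alpha^2 \sigma^2,$$
and imposing $\alpha \leq \gamma/(2L)$ ensures the coefficient of the suboptimality is at least $\alpha \gamma > 0$.

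The rest is bookkeeping: telescoping over $t = 1, \ldots, T$, using $\|x_0 - x^*\| \leq R$, and dividing by $\alpha \gamma T$ gives the running-average bound
$$\frac{1}{T} \sum_{t=1}^T \mathbb{E}(f(x_{t-1}) - f(x^*)) \leq \frac{R^2}{\alpha \gamma T} + \frac{\alpha \sigma^2}{\gamma}.$$
I would then optimize $\alpha$ subject to the constraint $\alpha \leq \gamma/(2L)$: the balanced choice $\alpha = R/(\sigma \sqrt{T})$ (when admissible) makes both terms equal to $R \sigma / (\gamma \sqrt{T})$, while in the low-noise regime we fall back to $\alpha = \gamma/(2L)$ and the deterministic-type term dominates. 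A short case analysis combines these into the form stated in the theorem, with a trivial re-indexing converting $\sum f(x_{t-1})$ into $\sum f(x_t)$. The main subtlety --- more bookkeeping than conceptual --- is the step-size restriction $\alpha \leq \gamma/(2L)$, which is strictly tighter than the $\alpha \leq 1/(2L)$ admissible in the convex analogue and is what ultimately forces the extra $1/\gamma$ factor in the $LR^2/T$ term of the final bound.
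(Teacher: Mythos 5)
Your outline is the standard quasar-convex adaptation of the convex SGD argument — track $\|x_t - x^*\|^2$, plug in $\nabla f(x)^T(x-x^*) \ge \gamma(f(x)-f(x^*))$ for the cross term and $\|\nabla f(x)\|^2 \le 2L(f(x)-f(x^*))$ for the quadratic term — and it is the same approach as in \citet{gower2020sgd}. It is internally consistent, but it does \emph{not} prove the bound as stated in Theorem~\ref{convex_subopt}: it proves a strictly weaker one.

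The issue is quantitative and comes precisely from the step size constraint you flag at the end. Your one-step recursion is
\begin{equation}
\notag
\mathbb{E}\|x_t - x^*\|^2 \le \|x_{t-1}-x^*\|^2 - 2\alpha(\gamma - L\alpha)\bigl(f(x_{t-1})-f(x^*)\bigr) + \alpha^2\sigma^2,
\end{equation}
and the coefficient $2\alpha(\gamma - L\alpha)$ is maximized at $\alpha = \gamma/(2L)$, where it equals $\gamma^2/(2L)$. After telescoping and dividing, the noise-free term is therefore $R^2 / (2\alpha(\gamma-L\alpha)T) \ge 2LR^2/(\gamma^2 T)$, i.e.\ of order $LR^2/(\gamma^2 T)$. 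You get one factor of $1/\gamma$ from replacing $\nabla f^T(x-x^*)$ by $\gamma(f-f^*)$, and a \emph{second} factor of $1/\gamma$ from the step size cap $\alpha \le \gamma/(2L)$; you only budget for one. The theorem's claim is $4\bigl(R\sigma/(\gamma\sqrt{T}) + LR^2/(\gamma T)\bigr)$, and since $\gamma \le 1$ your $LR^2/(\gamma^2 T)$ is genuinely larger. (The re-indexing from $\sum_{t=0}^{T-1}$ to $\sum_{t=1}^{T}$ is, as you say, bookkeeping — e.g.\ telescope one extra step.)

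The paper closes the extra $1/\gamma$ by a different decomposition that lets it run with step sizes up to $1/(2L)$ rather than $\gamma/(2L)$. Instead of bounding $\alpha^2\|\nabla f(x_t)\|^2$ by $2L\alpha^2(f(x_t)-f^*)$ and absorbing it into the $\gamma$ coefficient, the paper starts from the descent lemma, rewrites the quasar-convexity contribution as
\begin{equation}
\notag
\frac{1}{\gamma}\langle \nabla f(x_t), x_{t+1}-x^*\rangle + \Bigl(\frac{1}{\gamma}-1\Bigr)\langle \nabla f(x_t), x_t - x_{t+1}\rangle,
\end{equation}
and controls the second piece using $\alpha\|\nabla f(x_t)\|^2 \le 2(\Delta_t - \Delta_{t+1}) + L\alpha^2\sigma^2$, which telescopes in function values rather than contributing to the damping coefficient. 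This double telescoping (in distances \emph{and} in $\Delta_t$) is the ingredient your outline is missing, and is exactly what buys the $1/\gamma$ instead of $1/\gamma^2$. Without it you can recover \citet{gower2020sgd}'s rate but not the sharper one claimed here.
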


\begin{proof}
First note that 
\begin{equation}\begin{aligned}
\label{dist}
    \|x_{t}-x^*\|^2 &= \|x_{t+1}-x^*\|^2 + 2 \left\langle x_t - x_{t+1}, x_{t+1} - x^* \right\rangle + \|x_{t+1}-x_t\|^2 \\
    &= \|x_{t+1}-x^*\|^2 + \|x_{t+1}-x_t\|^2 + 2\alpha \left\langle \nabla f(x_t,\xi_t), x_{t+1} - x^* \right\rangle
\end{aligned}\end{equation}
Denote $\Delta_t = \mathbb{E} \left[ f(x_t)-f^* \right]$, then we have
\begin{equation}
\label{Delta}
    \begin{aligned}
    &\Delta_{t+1} \leq \mathbb{E} \left[ f(x_t) + \left\langle \nabla f(x_t), x_{t+1}-x_t \right\rangle + \frac{L}{2} \|x_{t+1}-x_t \|^2 - f(x^*)\right] \\
    &\leq \mathbb{E} \left[ \left\langle \nabla f(x_t), x_{t+1}-x_t \right\rangle + \frac{L}{2} \|x_{t+1}-x_t \|^2 + \frac{1}{\gamma} \left\langle\nabla f(x_t), x_t-x^* \right\rangle  \right] \\
    &= \mathbb{E} \left[ \left\langle \nabla f(x_t), x_{t+1} + \left( \frac{1}{\gamma}-1 \right) x_t - \frac{1}{\gamma} x^* \right\rangle + \frac{L}{2} \|x_{t+1}-x_t \|^2 \right] \\
    &= \mathbb{E} \left[ \frac{1}{\gamma} \left\langle \nabla f(x_t), x_{t+1}-x^* \right\rangle + \left( \frac{1}{\gamma}-1 \right) \left\langle \nabla f(x_t), x_t-x_{t+1} \right\rangle + \frac{L}{2} \|x_{t+1}-x_t \|^2 \right]
    \end{aligned}
\end{equation}

Now we handle the first and second term in the above expression respectively. First by \eqref{dist}, for some fixed $\lambda > 0$ we have 
\begin{equation}
\label{1st_term}
\begin{aligned}
&\quad \mathbb{E} \left[ \left\langle \nabla f(x_t), x_{t+1}-x^* \right\rangle \right] \\
&= \mathbb{E} \left[ \left\langle \nabla f(x_t) - \nabla f(x_t,\xi_t), x_{t+1}-x^* \right\rangle \right] + \mathbb{E}\left[ \left\langle \nabla f(x_t,\xi_t), x_{t+1}-x^* \right\rangle \right] \\
&= \mathbb{E} \left[ \left\langle \nabla f(x_t) - \nabla f(x_t,\xi_t), x_{t+1}-x_t \right\rangle \right] + \mathbb{E}\left[ \left\langle \nabla f(x_t,\xi_t), x_{t+1}-x^* \right\rangle \right] \\
&\leq \frac{1}{2\lambda} \mathbb{E} \| \nabla f(x_t) - \nabla f(x_t,\xi_t) \|^2 + \mathbb{E} \left[ \frac{\lambda}{2} \|x_{t+1}-x_t\|^2 + \frac{1}{2\alpha} \left( \|x_t-x^*\|^2-\|x_{t+1}-x^*\|^2-\|x_{t+1}-x_t\|^2 \right) \right] \\
&\leq \frac{\sigma^2}{2\lambda} + \mathbb{E} \left[ \frac{1}{2\alpha} \|x_t-x^*\|^2 - \frac{1}{2\alpha} \|x_{t+1}-x^*\|^2 - \left( \frac{1}{2\alpha} - \frac{\lambda}{2} \right) \|x_{t+1}-x_t\|^2 \right]
\end{aligned}
\end{equation}
Next, as long as $\alpha < \frac{1}{2L}$, by the $L$-smoothness of $f$ we have
\begin{equation}
    \begin{aligned}
    &\quad\Delta_{t+1}-\Delta_t \leq -\alpha \mathbb{E}\|\nabla f(x_t)\|^2 + \frac{L}{2}\alpha^2 \left( \mathbb{E} \|\nabla f(x_t)\|^2 +\sigma^2 \right) \leq - \frac{\alpha}{2} \mathbb{E}\|\nabla f(x_t)\|^2 + \frac{L}{2}\alpha^2\sigma^2
    \end{aligned}
\end{equation}
Therefore
\begin{equation}
\label{2nd_term}
    \begin{aligned}
    &\quad \mathbb{E} \left\langle \nabla f(x_t),x_t-x_{t+1} \right\rangle 
    = \alpha \mathbb{E} \|\nabla f(x_t)\|^2 \\
    &\leq 2 \left( \Delta_t - \Delta_{t+1} \right) + L\alpha^2\sigma^2
    \end{aligned}
\end{equation}
Now, by plugging \eqref{1st_term} and \eqref{2nd_term} into \eqref{Delta} we have
\begin{equation}
\begin{aligned}
 \Delta_{t+1} &\leq \frac{\sigma^2}{2\gamma \lambda} + \frac{1}{2\gamma\alpha} \mathbb{E} \left[ \|x_t-x^*\|^2-\|x_{t+1}-x^*\|^2 \right] - \frac{1}{\gamma} \left( \frac{1}{2\alpha} - \frac{\lambda}{2} - \frac{L\gamma}{2} \right) \mathbb{E}\|x_{t+1}-x_t\|^2 \\
    &+ 2\left( \frac{1}{\gamma} -1 \right) (\Delta_t - \Delta_{t+1}) + \left( \frac{1}{\gamma} -1 \right) L \alpha^2\sigma^2
\end{aligned}
\end{equation}
Choosing $\lambda = \frac{1}{\alpha} - L\gamma$ in the RHS and rearranging, we obtain 
\begin{equation}
    \frac{1}{2\gamma\alpha} \mathbb{E} \left[ \|x_t-x^*\|^2-\|x_{t+1}-x^*\|^2 \right] + \frac{\sigma^2\alpha}{2\gamma(1-\alpha L\gamma)} + \left(\frac{1}{\gamma}-1 \right) L\alpha^2\sigma^2 \geq \left( \frac{2}{\gamma}-1 \right) \Delta_{t+1} - \left( \frac{2}{\gamma}-2 \right) \Delta_t 
\end{equation}
Perform a telescope sum  for $t=0,1,\cdots ,T-1$ gives
\begin{equation}
\label{telescope}
    \left( \frac{2}{\gamma}-1 \right) \Delta_{T} + \sum_{t=1}^{T-1} \Delta_t \leq \frac{R^2}{2\gamma\alpha} + \frac{\sigma^2\alpha}{2\gamma(1-\alpha L\gamma)}T +  \left(\frac{1}{\gamma}-1 \right) L\alpha^2\sigma^2 T + \left( \frac{2}{\gamma}-2 \right) \Delta_0
\end{equation}
Again by $L$-smoothness we can see that $\Delta_0 \leq \frac{L}{2} R^2$, thus
\begin{equation}
\label{sum_Delta}
    \sum_{t=1}^{T} \Delta_t \leq \frac{R^2}{2\gamma\alpha} + \frac{\sigma^2\alpha}{2\gamma(1-\alpha L\gamma)}T +  \left(\frac{1}{\gamma}-1 \right) L\alpha^2\sigma^2 T + \left( \frac{1}{\gamma}-1 \right) LR^2
\end{equation}
Choose $\alpha = \frac{R}{2\sigma\sqrt{T}}$, which is smaller than $\frac{1}{2L} $ when $T> \frac{R^2L^2}{\sigma^2}$, then
\begin{equation}
    \frac{1}{T} \sum_{t=1}^{T} \Delta_t \leq \frac{2R\sigma}{\gamma\sqrt{T}} + 2 \left(\frac{1}{\gamma}-1 \right)\frac{R^2L}{T} < \frac{4R\sigma}{\gamma\sqrt{T}}
\end{equation}
Otherwise if $T \leq \frac{R^2L^2}{\sigma^2}$, it follows from \eqref{sum_Delta} that
\begin{equation}
    \frac{1}{T} \sum_{t=1}^{T} \Delta_t \leq \frac{R^2}{2\gamma\alpha T} + \frac{\sigma^2\alpha}{2\gamma(1-\alpha L\gamma)} +  \left(\frac{1}{\gamma}-1 \right) L\alpha^2\sigma^2  + \left( \frac{1}{\gamma}-1 \right) \frac{LR^2}{T}
\end{equation}
We choose $\alpha = \frac{1}{2L}$ in the above inequality, so that by some calculation we have
\begin{equation}
    \frac{1}{T} \sum_{t=1}^{T} \Delta_t \leq 4 \left( \frac{R\sigma}{\gamma\sqrt{T}} + \frac{1}{\gamma}\frac{R^2L}{T} \right)
\end{equation}
\end{proof}

\begin{remark}
The only existing convergence guarantee in our setting that we are aware of is established in \citet{gower2020sgd}. Specifically, they show that in $T$ iterations SGD ensures that 
\begin{equation}
    \frac{1}{T} \sum_{t=0}^{T-1} \mathbb{E}\left[ f(x_k)-f(x^*) \right] \leq \frac{R^2+2\beta^2\sigma^2}{\beta\sqrt{T}}
\end{equation}
for some $\beta \in \left( 0,\frac{\gamma}{L} \right)$. To achieve this convergence rate the step size is chosen as $\alpha = \frac{\beta}{\sqrt{T}}$. We observe that if the step size is chosen more carefully, then their approach can yield a convergence rate which is the same as ours in dominating terms. A detailed discussion is deferred to Appendix \ref{comparison}.
\end{remark}

\begin{corollary}
\label{convex_complexity}
For the class of $L$-smooth, $\gamma$-quasar-convex functions the complexity os SGD for finding $\epsilon$-optimal point is $\mathcal{O} \left( \frac{R^2\sigma^2}{\gamma^2\epsilon^2} + \frac{R^2L}{\gamma\epsilon} \right)$ 
\end{corollary}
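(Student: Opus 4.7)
The plan is to derive this complexity bound as a direct consequence of Theorem \ref{convex_subopt}. Since the SGD algorithm outputs $\widetilde{x}$ uniformly at random from $\{x_1,\ldots,x_T\}$, the expected suboptimality of the output coincides with the average suboptimality appearing on the left-hand side of the bound in Theorem \ref{convex_subopt}, namely
\begin{equation*}
    \mathbb{E}\left[f(\widetilde{x})-f(x^*)\right] = \frac{1}{T} \sum_{t=1}^{T} \mathbb{E}\left[f(x_t)-f(x^*)\right] \leq 4 \left( \frac{R\sigma}{\gamma\sqrt{T}} + \frac{1}{\gamma}\frac{R^2L}{T} \right).
\end{equation*}

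To obtain an $\epsilon$-optimal point in expectation I would then invert this bound and require each of the two summands on the right-hand side to be at most $\epsilon/2$ (up to constants). The stochastic term $\frac{R\sigma}{\gamma\sqrt{T}} \lesssim \epsilon$ forces $T \gtrsim \frac{R^2\sigma^2}{\gamma^2\epsilon^2}$, while the smoothness term $\frac{R^2 L}{\gamma T} \lesssim \epsilon$ forces $T \gtrsim \frac{R^2 L}{\gamma \epsilon}$. Taking the sum of these two thresholds yields a value of $T$ that dominates both, giving the claimed complexity $\mathcal{O}\!\left( \frac{R^2\sigma^2}{\gamma^2\epsilon^2} + \frac{R^2L}{\gamma\epsilon} \right)$.

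There is essentially no obstacle here, as the statement is a direct corollary of the theorem proved immediately above. The only subtlety worth checking is that the step size $\alpha = \frac{R}{2\sigma\sqrt{T}}$ (respectively $\alpha = \frac{1}{2L}$ in the small-$T$ regime) required by Theorem \ref{convex_subopt} can indeed be chosen given the value of $T$ dictated by $\epsilon$; this is automatic because the two regimes in the proof of the theorem were precisely designed to cover all $T$.
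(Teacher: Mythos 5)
Your proposal is correct and matches the intended derivation: the paper states Corollary~\ref{convex_complexity} without an explicit proof precisely because it follows by inverting the bound in Theorem~\ref{convex_subopt} in the way you describe, using the fact that the uniformly random output of SGD has expected suboptimality equal to the average appearing in that theorem. Your observation about verifying that the step-size choices remain valid for the required $T$ is a good sanity check and does not reveal any gap.
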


Equipped with the above results, we can now establish a complexity upper bound for making gradient small. The idea is to use the \textit{SGD after SGD} approach proposed by \citet{allen2018make}, which is a stochastic counterpart of \textit{GD after SGD}. We first run SGD to find an $\epsilon_1$-optimal point, then run SGD starting from this point to find a point with small gradient.

\begin{theorem}
\label{qc_sto_grad}
There exists an algorithm such that for $L$-smooth and $\gamma$-quasi convex functions its complexity of finding $\epsilon$-stationary point is  $\mathcal{O} \left( \sigma^2 \left( \frac{LR}{\gamma\epsilon^4} \right)^{\frac{2}{3}} \right)$  (here we omit the lower order terms for convenience).
\end{theorem}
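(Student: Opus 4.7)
The plan is to mimic the structure of Theorem \ref{qc_det_grad}: first run SGD to reach a point of small function value gap, then run a second SGD phase from that point treating the function as a generic smooth (possibly non-convex) one to drive the gradient down. Concretely, for a parameter $\epsilon_1>0$ to be chosen later, first invoke Corollary \ref{convex_complexity} to obtain a point $\widetilde{x}_0$ with $\mathbb{E}[f(\widetilde{x}_0)-f(x^*)] \leq \epsilon_1$ using
$$T_1 = \mathcal{O}\left( \frac{R^2\sigma^2}{\gamma^2\epsilon_1^2} + \frac{R^2 L}{\gamma\epsilon_1} \right)$$
stochastic gradient queries. Then restart SGD from $\widetilde{x}_0$ and run it for $T_2$ additional steps, outputting a uniformly random iterate.

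For the second phase I would invoke the classical convergence analysis of SGD for $L$-smooth functions (which requires only smoothness, not quasar-convexity). Using the descent-lemma style argument $\mathbb{E}[f(x_{t+1})-f(x_t) \mid x_t] \leq -\tfrac{\alpha}{2}\|\nabla f(x_t)\|^2 + \tfrac{L\alpha^2\sigma^2}{2}$ for step size $\alpha \leq 1/L$, telescoping, and then taking outer expectation over $\widetilde{x}_0$ produces
$$\frac{1}{T_2}\sum_{t=0}^{T_2-1}\mathbb{E}\|\nabla f(x_t)\|^2 \leq \frac{2\,\mathbb{E}[f(\widetilde{x}_0)-f(x^*)]}{\alpha T_2} + L\alpha\sigma^2 \leq \frac{2\epsilon_1}{\alpha T_2} + L\alpha\sigma^2.$$
Choosing $\alpha = \min\{1/L,\, \epsilon^2/(L\sigma^2)\}$ and setting the right-hand side to $\epsilon^2$ yields $T_2 = \mathcal{O}\!\left( L\epsilon_1\sigma^2/\epsilon^4 \right)$ as the dominant term in the stochastic regime (the $L\epsilon_1/\epsilon^2$ contribution is lower-order and omitted per the statement). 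Since the output is a uniformly random iterate, Markov's inequality (or passing to $\mathbb{E}\|\nabla f\|$ via Jensen, depending on the optimality criterion used in the paper's conventions) turns this into the stated gradient-norm guarantee.

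The third step is to optimize $\epsilon_1$. The total cost $T_1 + T_2$ is dominated, in the stochastic regime where $\sigma>0$, by $\frac{R^2\sigma^2}{\gamma^2\epsilon_1^2} + \frac{L\sigma^2 \epsilon_1}{\epsilon^4}$. Setting the derivative with respect to $\epsilon_1$ to zero gives
$$\epsilon_1 = \left( \frac{R^2\epsilon^4}{\gamma^2 L} \right)^{1/3},$$
which, upon substitution, yields total complexity
$$\mathcal{O}\!\left( \sigma^2 \left( \frac{LR}{\gamma\epsilon^4} \right)^{2/3} \right),$$
as claimed; the other terms $R^2L/(\gamma\epsilon_1)$ and $L\epsilon_1/\epsilon^2$ are of lower order and absorbed.

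The only real subtlety I anticipate is the bookkeeping of expectations between the two phases: the bound from phase two is linear in the initial suboptimality, which is itself a random quantity produced by phase one. This is handled cleanly by the tower property, since the descent-lemma bound holds conditionally on $\widetilde{x}_0$ and then the outer expectation replaces $f(\widetilde{x}_0)-f(x^*)$ with its phase-one expectation bound $\epsilon_1$. Beyond that, the proof is essentially an algebraic optimization of $\epsilon_1$ and is parallel in spirit to the deterministic argument of Theorem \ref{qc_det_grad}, with the stochastic non-convex SGD rate $L\Delta_0\sigma^2/\epsilon^4$ replacing the deterministic rate $L\Delta_0/\epsilon^2$.
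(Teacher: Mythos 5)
Your proposal follows essentially the same two-phase \emph{SGD after SGD} argument as the paper: run SGD via Corollary \ref{convex_complexity} to get $\mathbb{E}[f(\widetilde{x}_0)-f(x^*)]\leq\epsilon_1$, then run non-convex SGD from $\widetilde{x}_0$ for $\mathcal{O}(L\epsilon_1\sigma^2\epsilon^{-4})$ steps, handle the cross-phase randomness by the tower property, and optimize $\epsilon_1\asymp(R^2\epsilon^4/(\gamma^2 L))^{1/3}$. The only cosmetic difference is that you reproduce the descent-lemma derivation and bound $\mathbb{E}\|\nabla f\|^2$ directly (passing to $\mathbb{E}\|\nabla f\|$ by Jensen), while the paper cites the Ghadimi--Lan result and carries the $(\Delta_1/\epsilon_1)^{1/4}$ factor through the conditional expectation; both are correct and yield the same complexity.
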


\begin{proof}
 Fixed $\epsilon_1>0$, we first run SGD for $\mathcal{O} \left( \frac{R^2\sigma^2}{\gamma^2\epsilon_1^2} + \frac{R^2L}{\gamma\epsilon_1} \right)$ iterations, then \ref{convex_complexity} ensures an output $X_1$ ( a random variable ) such that $\mathbb{E} \left[ f(X_1) - f(x^*) \right] \leq \epsilon_1$. Let $\mathcal{F}_1$ denote the $\sigma$-algebra generated by all the randomness in this stage.

Next,  we run SGD for another $\mathcal{O}\left( L \epsilon_1 \sigma^2\epsilon^{-4} \right)$ iterations, starting from $X_1$, then, according to a standard result in non-convex optimization \citet{ghadimi2013stochastic}, we can find a point (random variable) $X_2$ such that $\mathbb{E} \left[ \|\nabla f(X_2) \| \big| \mathcal{F}_1 \right] \leq \epsilon \left( \frac{f(X_1)-f(x^*)}{\epsilon_1} \right)^{\frac{1}{4}}$. This implies that 
\begin{equation}
    \mathbb{E} \|\nabla f(X_2) \| \leq \epsilon \mathbb{E} \left[  \left( \frac{f(X_1)-f(x^*)}{\epsilon_1} \right)^{\frac{1}{4}} \right] \leq \epsilon \left( \frac{\mathbb{E} \left[ f(X_1)-f^* \right]}{\epsilon_1} \right)^{\frac{1}{4}} \leq \epsilon
\end{equation}
The total iterations is then given by $\mathcal{O} \left( \dfrac{R^2\sigma^2}{\gamma^2\epsilon_1^2} + \dfrac{R^2L}{\gamma\epsilon_1} + L \epsilon_1 \sigma^2\epsilon^{-4} \right)$. Choose $\epsilon_1$ optimally and omitting lower order terms, we obtain the desired result.
\end{proof}

\section{Convergence Results for Smooth Strongly-Quasar-Convex Functions}
\label{sec_sqc}

In this section we turn to the study of smooth strongly-quasar-convex functions. As in Section \ref{sec_qc}, we consider two settings separately.

\subsection{Deterministic Setting}

In the deterministic setting, the following result was established in \citet{hinder2020near}.

\begin{theorem}
\label{hinder1}
( ~\cite[Theorem 1]{hinder2020near} )There exists an algorithm $\mathcal{A}_{sc}$ such that for $L$-smooth and $(\gamma ,\mu)$-strongly-quasar-convex functions, then the complexity of $\mathcal{A}_{sc}$ for finding an $\epsilon$-suboptimal point is $\mathcal{O}\left( \sqrt{\frac{\kappa}{\gamma^2}} \log \left( \frac{\kappa \Delta}{\epsilon} \right) \right)$, where $\kappa = L/\mu$.
\end{theorem}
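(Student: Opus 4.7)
The rate $\sqrt{\kappa/\gamma^2}\log(\kappa\Delta/\epsilon)$ is precisely Nesterov's accelerated rate $\sqrt{\kappa}\log(1/\epsilon)$ inflated by the factor $1/\gamma$, so my plan is to adapt accelerated gradient descent (AGD) directly to the strongly quasar-convex setting. Concretely, I would maintain three sequences $\{x_k\},\{y_k\},\{z_k\}$ in the linear-coupling style of AGD and design a Lyapunov function of the form
\[
\Phi_k \;=\; (f(x_k)-f^*) + \frac{A_k\mu}{2}\|z_k-x^*\|^2,
\]
with coefficients $A_k$ growing geometrically and momentum parameter of order $\gamma/\sqrt{\kappa}$. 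The target one-step inequality is $\Phi_{k+1}\leq \bigl(1-\Omega(\gamma/\sqrt{\kappa})\bigr)\Phi_k$, which, after taking a logarithm, produces the claimed complexity.

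The step where the analysis departs from classical AGD is the progress inequality at the extrapolation point $y_k$. In the classical argument one combines convexity, $f(y_k)-f^*\leq \nabla f(y_k)^T(y_k-x^*)$, with the descent lemma $f(x_{k+1})\leq f(y_k)-\tfrac{1}{2L}\|\nabla f(y_k)\|^2$ and the Euclidean update on $z_k$. In the quasar-convex setting the first inequality becomes
\[
f(y_k)-f^* \;\leq\; \frac{1}{\gamma}\,\nabla f(y_k)^T(y_k-x^*) - \frac{\mu}{2}\|y_k-x^*\|^2,
\]
and propagating the extra factor $1/\gamma$ through the coupling identities slows the contraction from $1/\sqrt{\kappa}$ to $\gamma/\sqrt{\kappa}$, giving precisely the $\sqrt{\kappa/\gamma^2}$ scaling.

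The hardest part will be that quasar-convexity is a \emph{one-sided} condition: it only constrains $f$ with respect to $x^*$ and does not yield the ``forward'' inequality $f(y_k)\geq f(x_k)+\nabla f(x_k)^T(y_k-x_k)$ that the standard AGD potential analysis invokes when relating $f(y_k)$ to $f(x_k)$. I would handle this as in \citet{hinder2020near}: at each outer step, run a binary search on the extrapolation/momentum parameter to certify a one-step surrogate inequality that plays the role of the missing convex bound. The extra $\log\kappa$ factor in the stated complexity comes from the combined cost of this binary search and the warm-start needed to synchronize the Lyapunov coefficients.

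Finally, one might hope instead for a black-box reduction by restarting the algorithm $\mathcal{A}_c$ of Theorem~\ref{Hinder1}. Strong quasar-convexity does imply the Polyak--Łojasiewicz-type bound $f(x)-f^*\leq \tfrac{1}{2\gamma^2\mu}\|\nabla f(x)\|^2$, but, unlike the classical strongly-convex case, it does not imply $\|x-x^*\|^2\leq C(f(x)-f^*)/\mu$ directly; combined with $L$-smoothness it yields only $\|x-x^*\|^2\leq O(\kappa/(\gamma^2\mu))(f(x)-f^*)$. Feeding this into a halving restart of $\mathcal{A}_c$ gives $\widetilde{\mathcal{O}}(\kappa/\gamma^{3/2})$ iterations per halving, strictly worse than the target bound, which is why a direct accelerated method rather than a black-box reduction appears necessary.
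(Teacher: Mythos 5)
This theorem is cited from \citet{hinder2020near} and the present paper offers no proof of its own, so there is no in-paper argument to compare against; the appropriate benchmark is the cited source. Your sketch does track the structure of Hinder et al.'s proof accurately at a high level: a momentum/linear-coupling scheme with a potential combining suboptimality and a weighted distance to $x^*$, a target one-step contraction of order $\gamma/\sqrt{\kappa}$, and — crucially — you identify that the genuine obstruction is the one-sidedness of quasar-convexity (the inequality only relates $f$ to $f(x^*)$, not $f(y_k)$ to $f(x_k)$), which is exactly why a per-iteration binary search over the momentum parameter is required and why the extra $\log$ factor appears. Your closing argument that a black-box restart of $\mathcal{A}_c$ cannot match the bound is also correct: strong quasar-convexity plus $L$-smoothness gives $\|x-x^*\|^2 \le \mathcal{O}\bigl(\kappa/(\gamma^2\mu)\bigr)\bigl(f(x)-f^*\bigr)$ rather than the strongly-convex $\|x-x^*\|^2 \le 2(f(x)-f^*)/\mu$, and plugging this into the $\widetilde{\mathcal{O}}\bigl(\sqrt{LR^2/(\gamma\epsilon)}\bigr)$ guarantee of $\mathcal{A}_c$ per halving indeed yields $\widetilde{\mathcal{O}}(\kappa/\gamma^{3/2})$, which is worse than $\sqrt{\kappa}/\gamma$ whenever $\kappa > 1/\gamma$.

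One small technical caution: your potential $\Phi_k = (f(x_k)-f^*) + \frac{A_k\mu}{2}\|z_k-x^*\|^2$ with geometrically growing $A_k$ paired with the contraction $\Phi_{k+1} \le (1-\Omega(\gamma/\sqrt{\kappa}))\Phi_k$ is not the usual bookkeeping. The standard strongly-convex potential fixes the coefficient, $\Phi_k = (f(x_k)-f^*) + \frac{\mu}{2}\|z_k-x^*\|^2$, and contracts that; alternatively one puts the growing weight on the suboptimality term, $\Phi_k = A_k(f(x_k)-f^*) + \frac{1}{2}\|z_k-x^*\|^2$, and shows $\Phi_k \le \Phi_0$. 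Having a growing weight on the distance term while the whole potential contracts forces $\|z_k-x^*\|^2$ to decay faster than geometrically, which would need separate justification and is likely not what you intend. Since the rest of the sketch is sound, I would just fix the bookkeeping to one of the two standard forms before attempting the detailed one-step inequality.
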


An immediate consequence of the above result is the following:

\begin{corollary}
\label{simple}
The complexity of $\mathcal{A}_{sc}$ for finding $\epsilon$-stationary point is $\mathcal{O}\left( \sqrt{\frac{\kappa}{\gamma^2}} \log \left( \frac{\kappa L \Delta}{\epsilon^2} \right) \right)$. This also implies a complexity of $\mathcal{O}\left( \sqrt{\frac{\kappa}{\gamma^2}} \log \left( \frac{\sqrt{\kappa} LR }{\epsilon} \right) \right)$
\end{corollary}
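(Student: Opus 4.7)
The plan is to obtain this purely as a corollary of Theorem \ref{hinder1} combined with the standard smoothness inequality, with no new algorithmic ideas involved. Since $f$ is $L$-smooth with global minimum at $x^*$, the well-known bound $\|\nabla f(x)\|^2 \le 2L(f(x)-f(x^*))$ holds for all $x$. Therefore, producing an $\epsilon$-stationary point is implied by producing an $\left(\epsilon^2/(2L)\right)$-suboptimal point, so I would simply invoke $\mathcal{A}_{sc}$ with target suboptimality $\epsilon_1 = \epsilon^2/(2L)$.

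Substituting $\epsilon_1$ into the complexity bound from Theorem \ref{hinder1}, the iteration count becomes
\[
\mathcal{O}\!\left( \sqrt{\frac{\kappa}{\gamma^2}} \log\!\left( \frac{\kappa \Delta}{\epsilon_1}\right) \right) = \mathcal{O}\!\left( \sqrt{\frac{\kappa}{\gamma^2}} \log\!\left( \frac{\kappa L \Delta}{\epsilon^2}\right) \right),
\]
which is the first claimed bound.

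For the second form, I would use smoothness once more to bound $\Delta$. Since $\nabla f(x^*)=0$, $L$-smoothness gives $\Delta \le f(x_0)-f(x^*) \le \tfrac{L}{2}\|x_0-x^*\|^2 \le \tfrac{L}{2}R^2$. Plugging this into the logarithm yields
\[
\log\!\left( \frac{\kappa L \Delta}{\epsilon^2}\right) \le \log\!\left( \frac{\kappa L^2 R^2}{2\epsilon^2}\right) = \mathcal{O}\!\left( \log\!\left( \frac{\sqrt{\kappa}\, L R}{\epsilon}\right) \right),
\]
which absorbs the constant $1/2$ into the $\mathcal{O}$ and recovers the second stated complexity.

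There is essentially no technical obstacle here; the only thing to verify is that the two monotone substitutions (replacing $\epsilon$ by $\epsilon^2/(2L)$ in the suboptimality target, and bounding $\Delta$ by $LR^2/2$) both land inside the logarithm, so numerical constants and small multiplicative factors disappear under the $\mathcal{O}$ notation. Consequently the entire argument can be given in a few lines, making this a genuine corollary rather than a new theorem.
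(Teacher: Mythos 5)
Your proposal is correct and matches the paper's own (very terse) argument: apply the smoothness inequality relating gradient norm to suboptimality, invoke Theorem \ref{hinder1} with target $\epsilon^2/(2L)$, and then bound $\Delta \le \tfrac{L}{2}R^2$ to collapse the log into the second form. The only cosmetic difference is that you carry the (correct) constant $2L$ in $\|\nabla f(x)\|^2 \le 2L(f(x)-f(x^*))$ whereas the paper writes $L$; since this lands inside the logarithm it is immaterial.
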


\begin{proof}
Note that $L$-smoothness of $f$ implies that $\|\nabla f(x) \|^2 \leq L \left( f(x) - f(x^*) \right)$. The first statement follows from Theorem \ref{hinder1}. The second statement follows from the inequality $f(x_0)-f^* \leq \frac{L}{2}R^2$.
\end{proof}

\subsection{Stochastic Setting}
In the recent paper \citep{gower2020sgd}, the authors provide a convergence rate in terms of the distance between the output $x$ and the global minima $x^*$. Their analysis implies the following convergence rate of function value:

\begin{corollary}
\label{vanilla}
Suppose that we run vanilla SGD for $T$ with an appropriately chosen step size, then
\begin{equation}
    \mathbb{E}\left[ f(x_T)-f(x^*) \right] =  \mathcal{O} \left( \frac{L\sigma^2}{\mu^2\gamma^2T} \left( 1 + \log\left( \frac{\mu\gamma R \sqrt{T}}{\sigma} \right) \right) \right)
\end{equation}
\end{corollary}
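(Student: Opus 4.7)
The plan is to transfer the distance-based convergence rate of \citet{gower2020sgd} into a function-value bound via the smoothness of $f$. The conversion is essentially a one-line estimate, so once the underlying Gower--Loizou guarantee is quoted in its sharpest form, the corollary is immediate.

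First, I would recall from \citet{gower2020sgd} that for an $L$-smooth, $(\gamma,\mu)$-strongly-quasar-convex function, vanilla SGD with an appropriately tuned step size (for instance a carefully chosen constant step size, or a piecewise/geometrically decreasing schedule) satisfies
$$\mathbb{E}\|x_T-x^*\|^2 \;=\; \mathcal{O}\!\left(\frac{\sigma^2}{\mu^2\gamma^2 T}\left(1+\log\!\left(\frac{\mu\gamma R\sqrt{T}}{\sigma}\right)\right)\right).$$
The logarithmic factor arises from balancing the transient contraction $(1-\alpha\mu\gamma)^T R^2$ against the noise floor $\alpha\sigma^2/(\mu\gamma)$ in the standard recursion for the squared distance; the optimal $\alpha$ is of order $(\mu\gamma T)^{-1}\log(\mu\gamma R\sqrt{T}/\sigma)$, which produces exactly the stated log term.

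Next, I would invoke the standard consequence of $L$-smoothness: since $x^*$ is a global minimizer and therefore $\nabla f(x^*)=0$, one has
$$f(x)-f(x^*)\;\leq\;\tfrac{L}{2}\|x-x^*\|^2 \quad\text{for every }x\in\mathbb{R}^n.$$
Evaluating at $x=x_T$, taking expectations and substituting the distance bound yields
$$\mathbb{E}[f(x_T)-f(x^*)]\;\leq\;\tfrac{L}{2}\,\mathbb{E}\|x_T-x^*\|^2\;=\;\mathcal{O}\!\left(\frac{L\sigma^2}{\mu^2\gamma^2 T}\left(1+\log\!\left(\frac{\mu\gamma R\sqrt{T}}{\sigma}\right)\right)\right),$$
which is exactly the claimed rate.

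The only genuine obstacle is bookkeeping: one must verify that the step-size schedule and the variance assumption used in \citet{gower2020sgd} really lead to the logarithmic factor in the clean form $\log(\mu\gamma R\sqrt{T}/\sigma)$, as opposed to a cruder $\log T$. This is a routine tuning calculation and introduces no ideas beyond those already present in \citet{gower2020sgd}; the smoothness step is the only conceptual content of the corollary.
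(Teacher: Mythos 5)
Your proposal matches the paper's proof step for step: quote the Gower--Loizou distance recursion, tune the step size to $\alpha \asymp (\mu\gamma T)^{-1}\log(\mu^2\gamma^2R^2T/\sigma^2)$ to balance the geometric decay against the noise floor, and then pass to function values via $f(x_T)-f(x^*)\le \tfrac{L}{2}\|x_T-x^*\|^2$. Incidentally, your intermediate distance bound $\mathbb{E}\|x_T-x^*\|^2=\mathcal{O}\!\left(\tfrac{\sigma^2}{\mu^2\gamma^2T}(1+\log(\mu\gamma R\sqrt{T}/\sigma))\right)$ is in fact the correct one; the paper's displayed intermediate inequality carries a spurious factor of $L$ (a typo), which only enters after the smoothness step.
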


\begin{proof}
According to ~\cite[Theorem D.2]{gower2020sgd} , we have
\begin{equation}
    \mathbb{E}\|x_T-x^*\|^2 \leq \left( 1-\alpha\mu\gamma \right)^T + \frac{2\alpha\sigma^2}{\mu\gamma} \leq \exp\left( -\alpha\mu\gamma T \right) R^2 + \frac{2\alpha\sigma^2}{\mu\gamma}
\end{equation}
Choosing $\alpha = \frac{1}{\mu\gamma T}\log\left( \frac{\mu^2\gamma^2R^2T}{2\sigma^2} \right)$ minimize the RHS of the above inequality. Thus we have
\begin{equation}
    \mathbb{E}\|x_T-x^*\|^2 \leq \frac{L\sigma^2}{\mu^2\gamma^2T} \left( 1 + \log\left( \frac{\mu\gamma R \sqrt{T}}{\sigma} \right) \right)
\end{equation}
Finally by $L$-smoothness we have
\begin{equation}
    f(x_T)-f(x^*) \leq \frac{L}{2}\|x_T-x^*\|^2
\notag
\end{equation}
and the conclusion follows.
\end{proof}

Next we refine the analysis and proves a new convergence guarantee which has better dependence on $\mu$ and $L$.
\begin{theorem}
\label{sqc_sto_complexity}
Suppose that we run vanilla SGD for $T > \max\left\{ \frac{3\sigma^2}{\gamma^2\mu^2R^2},\frac{6L}{\gamma^2\mu}\left( \log\left( \frac{2L\mu R^2}{\sigma^2} \right) + 1 \right) \right\}$ iterations with some fixed step size, then it can output a random point $X$ such that
\begin{equation}
    \mathbb{E}\left[ f(X)-f(x^*) \right] \leq \widetilde{\mathcal{O}}\left( \frac{\sigma^2}{\gamma^2\mu T} \right)
\end{equation}
\end{theorem}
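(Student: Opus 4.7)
The plan is to bypass the two-step route of Corollary~\ref{vanilla}, which first bounds $\mathbb{E}\|x_T-x^*\|^2$ and then converts to function value through $L$-smoothness, incurring an unnecessary factor of $L/\mu$. Instead I would work with a single recursion on $r_t := \mathbb{E}\|x_t-x^*\|^2$ whose right-hand side already contains the suboptimality $\Delta_t := \mathbb{E}[f(x_t)-f(x^*)]$, and then extract $\Delta_t$ through a weighted random-iterate output, mirroring the classical SGD analysis for strongly convex functions.

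The key one-step inequality comes from expanding $\|x_{t+1}-x^*\|^2 = \|x_t - x^* - \alpha g_t\|^2$, taking conditional expectation on $x_t$, and using two ingredients: (i) strong quasar-convexity, rearranged as $\langle \nabla f(x_t), x_t-x^*\rangle \geq \gamma(f(x_t)-f(x^*)) + \frac{\gamma\mu}{2}\|x_t-x^*\|^2$, and (ii) the smoothness bound $\|\nabla f(x_t)\|^2 \leq 2L(f(x_t)-f(x^*))$ combined with $\mathbb{E}\|g_t\|^2 \leq \|\nabla f(x_t)\|^2 + \sigma^2$. Choosing any step size $\alpha \leq \gamma/(2L)$ absorbs the gradient-norm term and yields
\[
r_{t+1} \leq (1-\alpha\gamma\mu)\, r_t - \alpha\gamma\, \Delta_t + \alpha^2\sigma^2.
\]

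To extract a function value bound I would set $\rho = 1-\alpha\gamma\mu$, divide through by $\rho^{t+1}$, and sum for $t=0,\ldots,T-1$, so that the $r_t$-terms telescope. Defining the random output $X$ to equal $x_t$ with probability $w_t/W$, where $w_t = \rho^{-(t+1)}$ and $W = \sum_{t=0}^{T-1} w_t = (\rho^{-T}-1)/(\alpha\gamma\mu)$, produces
\[
\mathbb{E}[f(X)-f(x^*)] \leq \frac{R^2\mu}{\rho^{-T}-1} + \frac{\alpha\sigma^2}{\gamma} \leq \frac{R^2\mu}{e^{\alpha\gamma\mu T}-1} + \frac{\alpha\sigma^2}{\gamma},
\]
using $\rho^{-T} \geq e^{\alpha\gamma\mu T}$. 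The choice $\alpha = \frac{1}{\gamma\mu T}\log(1 + R^2\gamma^2\mu^2 T/\sigma^2)$ balances the two terms at $\widetilde{\mathcal{O}}(\sigma^2/(\gamma^2\mu T))$, which is exactly the claimed rate.

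The main obstacle I expect is checking that the two hypotheses on $T$ in the theorem statement correspond precisely to the technical constraints produced by this argument. Concretely, the condition $T > 3\sigma^2/(\gamma^2\mu^2 R^2)$ keeps $R^2\gamma^2\mu^2 T/\sigma^2$ above a constant so that the logarithm is nontrivial and the two balanced terms are genuinely comparable, while the condition $T > (6L/(\gamma^2\mu))(\log(2L\mu R^2/\sigma^2)+1)$ is what forces the tuned step size to satisfy $\alpha \leq \gamma/(2L)$, which was required to absorb $\|\nabla f(x_t)\|^2$ in step~(ii). Aside from pinning these constants down, the remainder of the proof is a routine adaptation of the standard strongly convex SGD analysis, with the quasar-convexity factor $\gamma$ entering through inequality (i).
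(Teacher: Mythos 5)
Your proposal is correct and follows essentially the same route as the paper's own proof: the same distance recursion $r_{t+1}\le(1-\alpha\gamma\mu)r_t-c\,\alpha\,\Delta_t+\alpha^2\sigma^2$ derived from strong quasar-convexity and $\|\nabla f\|^2\le 2L(f-f^*)$, the same geometrically weighted random-iterate output (your $w_t\propto\rho^{-(t+1)}$ is proportional to the paper's $(1-\gamma\mu\alpha)^{T-t-1}$), and the same logarithmic step-size tuning, with the two hypotheses on $T$ playing exactly the roles you identify. The only cosmetic differences are that you fix $\alpha\le\gamma/(2L)$ up front to replace $\gamma-\alpha L$ by $\gamma/2$ immediately, you use the cleaner bound $\rho^{-T}\ge e^{\alpha\gamma\mu T}$, and you put a $+1$ inside the logarithm for robustness.
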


\begin{proof}
Note that for any $x$ we have
\begin{equation}
    \begin{aligned}
    f(x^*) \leq f(x) - \frac{1}{2L}\|\nabla f(x)\|^2
    \end{aligned}
\notag
\end{equation}
Thus,
\begin{equation}
\label{recursion}
\begin{aligned}
    \mathbb{E}\|x_{t+1}-x^*\|^2 &\leq \mathbb{E}\left[ \|x_t-x^*\|^2 - 2\alpha (x_t-x^*)^T\nabla f(x) + \alpha^2 \left( \|\nabla f(x)\|^2+\sigma^2 \right)\right] \\
    &\leq \mathbb{E}\left[ (1-\gamma\mu\alpha)\|x_t-x^*\|^2 - 2\gamma\alpha (f(x_t)-f(x^*)) + \alpha^2 \left( \|\nabla f(x)\|^2+\sigma^2 \right)\right] \\
    &\leq \mathbb{E}\left[ (1-\gamma\mu\alpha)\|x_t-x^*\|^2 - 2\gamma\alpha (f(x_t)-f(x^*)) + 2\alpha^2L (f(x_t)-f(x^*)) + \alpha^2\sigma^2\right] \\
    &= \mathbb{E}\left[(1-\gamma\mu\alpha)\|x_t-x^*\|^2 - 2\alpha (\gamma-\alpha L)(f(x_t)-f(x^*)) + \alpha^2\sigma^2\right]
\end{aligned}
\end{equation}
Recursively apply the above inequality we have that
\begin{equation}
    2\alpha (\gamma-\alpha L) \sum_{t<T}(1-\gamma\mu\alpha)^{T-t-1}(f(x_t)-f(x^*)) \leq  \frac{\alpha\sigma^2}{\gamma\mu}+ (1-\gamma\mu\alpha)^TR^2 
\end{equation}
Suppose $X$ is a random variable such that $X=x_t, t=0,1,\cdots ,T-1$ with probability $(1-\gamma\mu\alpha)^{T-t-1}/Z$ where $Z$ is a normalizing constant, then we have
\begin{equation}
    \frac{2(\gamma-\alpha L)}{\gamma\mu}\left( 1-(1-\gamma\mu\alpha)^T \right) \mathbb{E}\left[ f(X)-f(x^*) \right] \leq \frac{\alpha\sigma^2}{\gamma\mu}+ (1-\gamma\mu\alpha)^TR^2 
\end{equation}
\begin{equation}
    \left( 1-(1-\gamma\mu\alpha)^T \right) \mathbb{E}\left[ f(X)-f(x^*) \right] \leq \frac{\alpha\sigma^2}{2(\gamma-\alpha L)} + \frac{\gamma\mu(1-\gamma\mu\alpha)^TR^2}{2(\gamma-\alpha L)}
\end{equation}
We choose $\alpha = \frac{1}{\gamma\mu T}\log\left( \frac{\gamma^2\mu^2TR^2}{\sigma^2} \right)$. When $T> \frac{3\sigma^2}{\gamma^2\mu^2R^2}$ we have $\alpha> \frac{1}{\gamma\mu T}$, so $1 - (1-\gamma\mu\alpha)^T > \frac{1}{2}$. On the other hand, if $T > \frac{6L}{\gamma^2\mu}\max\left\{ \log\left( \frac{2L\mu R^2}{\sigma^2} \right), 1 \right\}$, we have $\gamma - \alpha L > \frac{1}{2}\gamma$.
Thus for large $T$, $\mathbb{E}\left[ f(x)-f(x^*) \right] = \widetilde{\mathcal{O}}\left( \frac{\sigma^2}{\gamma^2\mu T} \right)$.
\end{proof}

\begin{theorem}
\label{sqc_sto_grad}
There exists an algorithm that achieves a complexity of $\mathcal{O}\left( \sqrt{\frac{L}{\mu}} \frac{\sigma^2}{\gamma\epsilon^2} + \frac{L}{\mu} \log\left( \frac{\gamma\mu \sqrt{L}R^2}{\epsilon^2} \right) \right)$ for finding $\epsilon$-stationary points for all small $\epsilon$.
\end{theorem}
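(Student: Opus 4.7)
The plan is to reuse the ``SGD after SGD'' device from the proof of Theorem~\ref{qc_sto_grad}, but replace the first-stage guarantee (which was Corollary~\ref{convex_complexity} in that argument) with the sharper strongly-quasar-convex rate just established in Theorem~\ref{sqc_sto_complexity}. This single swap is what upgrades the leading $\epsilon^{-8/3}$-type cost of Theorem~\ref{qc_sto_grad} to the $\sqrt{L/\mu}\,\sigma^2/(\gamma\epsilon^2)$ rate targeted here.

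For a free parameter $\epsilon_1>0$ to be optimised at the end, I would first run vanilla SGD as in Theorem~\ref{sqc_sto_complexity} for $T_1 = \widetilde{\mathcal{O}}\bigl(\sigma^2/(\gamma^2\mu\epsilon_1)\bigr)$ iterations, producing a random iterate $X_1$ with $\mathbb{E}[f(X_1)-f(x^*)] \leq \epsilon_1$; the same theorem also enforces an additive warm-up of order $L/(\gamma^2\mu)\log(\cdot)$ through its minimum-$T$ hypotheses, which will ultimately reappear as the stated $\tfrac{L}{\mu}\log(\cdot)$ tail term. Let $\mathcal{F}_1$ denote the $\sigma$-algebra generated by all randomness used in this stage. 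Next, starting from $X_1$, I would run vanilla SGD with the standard Ghadimi--Lan non-convex step-size schedule for a further $T_2$ iterations. Conditional on $\mathcal{F}_1$, the classical smooth non-convex analysis yields
\[
\mathbb{E}\bigl[\|\nabla f(X_2)\|^2 \,\big|\, \mathcal{F}_1\bigr] \;\leq\; \mathcal{O}\!\left(\sigma\sqrt{\frac{L\bigl(f(X_1)-f(x^*)\bigr)}{T_2}}\right),
\]
so choosing $T_2 = \Theta\bigl(L\epsilon_1\sigma^2/\epsilon^4\bigr)$ reshapes this into $\mathbb{E}\bigl[\|\nabla f(X_2)\|\mid \mathcal{F}_1\bigr] \leq \epsilon\bigl((f(X_1)-f^*)/\epsilon_1\bigr)^{1/4}$, the same inner bound exploited in the proof of Theorem~\ref{qc_sto_grad}. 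Two applications of Jensen's inequality, combined with $\mathbb{E}[f(X_1)-f^*]\leq \epsilon_1$, then give $\mathbb{E}\|\nabla f(X_2)\|\leq \epsilon$, so $X_2$ is the desired (random) $\epsilon$-stationary point.

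The total cost is $\widetilde{\mathcal{O}}\bigl(\sigma^2/(\gamma^2\mu\epsilon_1) + L\epsilon_1\sigma^2/\epsilon^4\bigr) + \mathcal{O}\bigl((L/\mu)\log(\cdot)\bigr)$, and balancing the two $\sigma^2$-terms via $\epsilon_1 = \epsilon^2/(\gamma\sqrt{L\mu})$ collapses them both to $\sqrt{L/\mu}\,\sigma^2/(\gamma\epsilon^2)$, matching the asserted rate. I do not foresee any genuine obstacle; the only care point --- and the only reason the statement is qualified by ``for all small $\epsilon$'' --- is that this choice of $\epsilon_1$ must respect the minimum-$T_1$ hypotheses of Theorem~\ref{sqc_sto_complexity} (notably $T_1 > 3\sigma^2/(\gamma^2\mu^2 R^2)$), which holds automatically once $\epsilon$ is small enough. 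The conditional-expectation-plus-Jensen trick in stage two is identical to the one already vetted in Theorem~\ref{qc_sto_grad}, so no new analytical difficulty arises.
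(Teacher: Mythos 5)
Your proposal is correct and follows essentially the same route as the paper: both run SGD with the strongly-quasar-convex guarantee of Theorem~\ref{sqc_sto_complexity} for $\widetilde{\mathcal{O}}(\sigma^2/(\gamma^2\mu\epsilon_1))$ iterations, then restart SGD from $X_1$ for $\mathcal{O}(L\epsilon_1\sigma^2/\epsilon^4)$ iterations using the Ghadimi--Lan non-convex analysis and the same conditional-expectation-plus-Jensen device as in Theorem~\ref{qc_sto_grad}, and finally balance via $\epsilon_1=\epsilon^2/(\gamma\sqrt{L\mu})$. The paper elides exactly these details by pointing to the proof of Theorem~\ref{qc_sto_grad}, so your writeup is merely a fuller rendering of the same argument.
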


\begin{proof}
We first run SGD for $\widetilde{\mathcal{O}}\left( \frac{\sigma^2}{\gamma^2\mu\epsilon_1}\right)$ to arrive at a (random) point $X_1$, then starting from $X_1$ we run SGD for another $\mathcal{O}\left( L\sigma^2\epsilon_1 \epsilon^{-4} \right)$ iterations. Then we can output a point with expected gradient norm smaller than $\epsilon$. The details are the same as the proof of Theorem \ref{qc_sto_grad}.
\end{proof}

\section{Extension to Non-smooth functions}
In this section we consider the optimization of possibly non-smooth quasar- and strongly quasar-convex functions. In this case we assume that (in expectation) the gradient is uniformly bounded, which is a standard assumption for non-smooth functions \citep{beck2017first,zhang2020complexity}. More precisely, in the deterministic setting we assume that $\|\nabla f(x)\| \leq G$ and in the stochastic setting we assume that the stochastic gradients $\nabla f(x,\xi)$ are unbiased and $\mathbb{E}\left[\|\nabla f(x,\xi)\|^2\right] \leq G^2$. These assumptions will be implicitly made throughout this section.

\begin{theorem}
Suppose that $f$ is $\gamma$-quasar-convex and we run SGD for $T$ iterations with step size $\alpha = \frac{R}{G\sqrt{T}}$. Then, we have
\begin{equation}
    \frac{1}{T} \sum_{t=0}^{T-1} \mathbb{E}\left[ f(x_t)-f(x^*) \right] \leq \frac{RG}{\gamma\sqrt{T}}
\end{equation}
\end{theorem}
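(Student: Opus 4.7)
The plan is to follow the textbook one-step analysis for projected/plain SGD, but substitute the quasar-convexity inequality in place of the usual convexity inequality. Concretely, starting from the SGD update $x_{t+1} = x_t - \alpha g_t$ with $g_t = \nabla f(x_t,\xi_t)$, I would expand
\[
\|x_{t+1}-x^*\|^2 = \|x_t-x^*\|^2 - 2\alpha \langle g_t, x_t-x^*\rangle + \alpha^2 \|g_t\|^2
\]
and take expectations. Using unbiasedness, the cross term becomes $-2\alpha \mathbb{E}\langle \nabla f(x_t), x_t-x^*\rangle$, and the bounded-second-moment assumption controls the last term by $\alpha^2 G^2$.

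Next I would apply the defining inequality \eqref{qc} of $\gamma$-quasar-convexity in the rearranged form
\[
\langle \nabla f(x_t), x_t-x^*\rangle \geq \gamma\,\bigl(f(x_t)-f(x^*)\bigr),
\]
which is the only place quasar-convexity enters. Plugging this in yields the one-step recursion
\[
\mathbb{E}\|x_{t+1}-x^*\|^2 \leq \mathbb{E}\|x_t-x^*\|^2 - 2\alpha\gamma\,\mathbb{E}[f(x_t)-f(x^*)] + \alpha^2 G^2.
\]

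Then I would telescope this recursion over $t = 0, 1, \ldots, T-1$. The left side collapses to $\mathbb{E}\|x_T-x^*\|^2 \geq 0$, the first right-hand term collapses to $\|x_0-x^*\|^2 \leq R^2$, and the noise term accumulates to $T\alpha^2 G^2$. Solving for the averaged suboptimality gives
\[
\frac{1}{T}\sum_{t=0}^{T-1}\mathbb{E}[f(x_t)-f(x^*)] \leq \frac{R^2}{2\alpha\gamma T} + \frac{\alpha G^2}{2\gamma}.
\]

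Finally, I would substitute $\alpha = R/(G\sqrt{T})$, which balances the two terms and produces the claimed $RG/(\gamma\sqrt{T})$ bound. There is no real obstacle here; the proof is essentially the standard non-smooth convex SGD analysis, with quasar-convexity providing exactly the same structural inequality (up to the factor $1/\gamma$) that convexity would. The only subtlety worth flagging is that, unlike in Theorem \ref{convex_subopt}, we do not invoke $L$-smoothness at any point, so no $R^2L/T$ correction term appears and no special care is needed to choose $\alpha < 1/(2L)$.
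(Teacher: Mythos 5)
Your proof is correct and is exactly the argument the paper alludes to when it says the proof ``has little difference with those for convex functions'' and omits it: the standard one-step expansion of $\|x_{t+1}-x^*\|^2$, the quasar-convexity inequality $\langle\nabla f(x_t),x_t-x^*\rangle\geq\gamma\bigl(f(x_t)-f(x^*)\bigr)$ in place of convexity, telescoping, and the balancing step size $\alpha=R/(G\sqrt{T})$. Your closing remark that no smoothness or step-size upper bound is needed here (in contrast to Theorem~\ref{convex_subopt}) is also accurate.
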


\begin{theorem}
Suppose that $f$ is $(\gamma,\mu)$-quasar-strongly-convex and we run SGD for $T$ iterations with step size $\alpha_t = \frac{1}{\lambda t}$. Then we have

(1). $\mathbb{E}\left[ \|x_t-x^*\|^2 \right] \leq \frac{G^2}{\gamma^2\mu^2 t}$ 

(2). Suppose that $\widetilde{x}$ is uniformly chosen among $x_{T/2},\cdots ,x_T$, then we have $\mathbb{E}\left[ f(\widetilde{x})-f(x^*) \right] \leq \mathcal{O}\left( \frac{G^2}{\gamma\mu T} \right)$
\end{theorem}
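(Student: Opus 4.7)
The plan is to run the standard SGD potential argument with quasar-convexity playing the role of convexity. Let $g_t = \nabla f(x_t,\xi_t)$ denote the stochastic gradient and write $\Delta_t = \mathbb{E}\|x_t-x^*\|^2$. Expanding the square in the SGD update and taking expectation gives
\begin{equation*}
\Delta_{t+1} = \Delta_t - 2\alpha_t\,\mathbb{E}\langle \nabla f(x_t), x_t-x^*\rangle + \alpha_t^2\,\mathbb{E}\|g_t\|^2.
\end{equation*}
The strong-quasar-convexity inequality, rearranged, reads $\nabla f(x)^\top(x-x^*)\geq \gamma(f(x)-f(x^*)) + \tfrac{\gamma\mu}{2}\|x-x^*\|^2$. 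Substituting this together with $\mathbb{E}\|g_t\|^2\leq G^2$ yields the master one-step inequality
\begin{equation*}
\Delta_{t+1}\leq (1-\alpha_t\gamma\mu)\Delta_t - 2\alpha_t\gamma\,\mathbb{E}[f(x_t)-f(x^*)] + \alpha_t^2 G^2,
\end{equation*}
which is the only place both \emph{strong convexity} (through $\mu$) and \emph{quasar-convexity} (through $\gamma$) enter.

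For part (1), I drop the non-positive middle term in the master inequality, leaving $\Delta_{t+1}\leq(1-\alpha_t\gamma\mu)\Delta_t+\alpha_t^2 G^2$. With $\alpha_t=1/(\lambda t)$ and $\lambda$ chosen of order $\gamma\mu$ (a convenient choice is $\lambda=\gamma\mu/2$, so that $1-\alpha_t\gamma\mu=1-2/t$), a direct induction on $t$ establishes the desired $\mathcal{O}(G^2/(\gamma^2\mu^2 t))$ bound. The base case requires either starting the induction at an index $t_0$ large enough that the claimed bound already exceeds $R^2$, or slightly inflating the constant to absorb the initial distance.

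For part (2), I use a different rearrangement of the master inequality: expanding $(1-\alpha_t\gamma\mu)\Delta_t$ and discarding the resulting non-positive $-\alpha_t\gamma\mu\Delta_t$ term gives
\begin{equation*}
2\alpha_t\gamma\,\mathbb{E}[f(x_t)-f(x^*)]\leq \Delta_t-\Delta_{t+1}+\alpha_t^2 G^2.
\end{equation*}
Summing from $t=T/2$ to $T-1$ telescopes the first two terms to $\Delta_{T/2}-\Delta_T\leq \Delta_{T/2}$, which by part~(1) is $\mathcal{O}(G^2/(\gamma^2\mu^2 T))$; the remaining noise term $G^2\sum_{t=T/2}^{T-1}\alpha_t^2=\mathcal{O}(G^2/(\lambda^2 T))$ is of the same order. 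Since $\alpha_t\geq 1/(\lambda T)$ on this window, I can lower-bound the left-hand side by $(2\gamma/(\lambda T))\sum_{t=T/2}^{T-1}\mathbb{E}[f(x_t)-f(x^*)]$; rearranging and dividing by the window length $T/2$ then yields the claimed $1/T$ rate for the uniform average $\widetilde{x}$.

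The main subtlety is the averaging in part (2): the telescoping naturally produces a sum of $\mathbb{E}[f(x_t)-f(x^*)]$ weighted by $\alpha_t$ rather than a uniform average. Restricting attention to the second half of the trajectory is what rescues the argument, because on $[T/2,T-1]$ the step sizes $\alpha_t=1/(\lambda t)$ differ by at most a factor of two, so weighted and uniform averaging agree up to an absolute constant. The choice $\lambda\asymp\gamma\mu$ is what balances the part~(1) contribution $\Delta_{T/2}$ against the noise contribution $G^2\sum\alpha_t^2$ so that both land at the same order, producing the final rate after rescaling.
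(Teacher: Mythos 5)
The paper does not supply a proof for this theorem — it merely says "the proof... is relatively standard... so we omit them here" — so there is nothing to compare against directly. Your argument is indeed the standard one-step SGD potential argument for strongly convex objectives with quasar-convexity plugged in, and the mechanics are sound: the master one-step inequality you derive is correct, the induction strategy for part (1) works (with the acknowledged constant inflation to absorb the base case, and $\lambda$ of order $\gamma\mu$ chosen so that $\alpha_t\gamma\mu\approx c/t$ with $c>1$), and the suffix-averaging trick for part (2) correctly eliminates the $\log T$ factor that the full-trajectory telescoping would otherwise leave.

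One thing worth flagging: when you carry out the arithmetic in part (2), the rate that falls out is actually $\mathcal{O}\bigl(G^2/(\gamma^2\mu T)\bigr)$, not the paper's stated $\mathcal{O}\bigl(G^2/(\gamma\mu T)\bigr)$. Concretely, with $\lambda\asymp\gamma\mu$ the left-hand side of the telescoped inequality is lower-bounded by $(2\gamma/(\lambda T))\sum\mathbb{E}[f(x_t)-f(x^*)]=(c/(\mu T))\sum\mathbb{E}[f(x_t)-f(x^*)]$ — a $1/\mu$ factor, no $\gamma$ — while the right-hand side, $\Delta_{T/2}+G^2\sum\alpha_t^2$, is $\mathcal{O}(G^2/(\gamma^2\mu^2 T))$. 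Rearranging and dividing by $T/2$ therefore gives $\mathcal{O}(G^2/(\gamma^2\mu T))$, and this is consistent with the general principle that quasar-convexity shrinks the effective step size by a factor of $\gamma$, which appears squared in the noise term. Both the weighted-average (Lacoste-Julien style) variant and the constant-step-size variant of this argument yield the same $\gamma^2$ dependence, so this appears to be a typo in the theorem statement rather than a gap in your reasoning; part (1)'s $\gamma^2\mu^2$ is in fact the internally consistent scaling. You write that the argument "yields the claimed $1/T$ rate" without explicitly confirming the $\gamma$-exponent — it would be worth stating plainly that your derivation produces $\gamma^2$ rather than $\gamma$.
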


The proof of these results are relatively standard and has little difference with those for convex functions, so we omit them here.

\section{Conclusion \& Future directions}
In this paper we study optimization of quasar- and strongly-quasar-convex functions with two different optimality criterions and in two different settings. However, there are still some interesting questions that remain unanswered. Firstly, it is unclear whether the dependency of our bounds on $\gamma$ is optimal. Indeed the discussion in Section \ref{comparison} suggests that they might be improved. Secondly we note that there exists another trick for finding $\epsilon$-stationary points for \textit{convex functions} in existing literature \citep{allen2018make,nesterov2012make}. The idea is to add a small perturbation to make the function strongly convex, which can be optimized very efficiently. This approach can yield complexities that match corresponding lower bounds. Unfortunately it cannot be applied to quasar-convex functions, since we cannot guarantee that $x^*$ is still the global minima after perturbation. It is thus unknown whether there exists other efficient approaches, or whether our approach is already optimal. Finally, we are looking forward to exploring convergence guarantees for other types of structured non-convex functions in the future.

\newpage
\bibliographystyle{plainnat}
\bibliography{reference}

\newpage
\clearpage
\appendix

\section{Formal descriptions of the setup}
\label{def}

In this section we introduce some useful concepts that allow us to rigorously describe the performance of a specific optimization algorithm.

\subsection{Optimization Oracle}

In order to describe the optimization process more conveniently, we assume that the optimization algorithm (only) has access to an \textit{oracle} which answers successive queries of the algorithm. 

In this paper we only consider two most commonly used oracle in the optimization literature.

\textbf{Deterministic Oracle} $\mathcal{D}(f)$. The algorithm sends a point $x$ to the oracle and the oracle responds with a pair $\left(f(x),\nabla f(x) \right)$.

\textbf{Stochastic Oracle} $\mathcal{S}(f,\sigma )$. The algorithm sends a point $x$ to the oracle and the oracle responds with random vector $g(x)$ such that $\mathbb{E}g(x) = f(x)$ and $\mathbb{E} \|g(x)-\nabla f(x)\|^2 \leq \sigma^2$.

For briefness we will refer to the optimization problem equipped with these two oracles as deterministic setting and stochastic setting, respectively. 

\subsection{Optimization Algorithms}

For a given oracle $\mathbb{O}$, we consider the set $\mathcal{A}\left( \mathbb{O} \right)$ consisting of all algorithms that works as follows: starting from a point $x_0$, it produces a (random) sequence $\{ x_t \}$ according the following recursive relation:
\begin{equation}
    x_t = \mathsf{A}_t \left( r, \mathsf{O}_0, \cdots , \mathsf{O}_{t-1} \right)
\end{equation}
where $\mathsf{O}_i$ is the oracle feedback at $x_i$, $r$ is a random seed and $\mathsf{A}_t$ is a deterministic mapping.

\subsection{Complexity Measures}

Consider a function class $\mathcal{F}$ and oracle class $\mathbb{O}$, let $\mathcal{P}\left[ \mathcal{F} \right]$ be the set of all distributions over $\mathcal{F}$ ,then for all $\epsilon > 0$ we define the complexity \textit{for finding approximate stationary point} as
\begin{equation}
\notag
    \sup_{\mathsf{O} \in \mathbb{O}} \sup_{P \in \mathcal{P}\left[ \mathcal{F} \right]} \inf_{\mathsf{A} \in \mathcal{A}(\mathsf{O})} \inf \left\{ T \in \mathbb{N} \big| \mathbb{E} \| \nabla f(x_T) \| \leq \epsilon \right\}
\end{equation}
and the complexity \textit{for finding approximate global minima} as
\begin{equation}
\notag
    \sup_{\mathsf{O} \in \mathbb{O}} \sup_{P \in \mathcal{P}\left[ \mathcal{F} \right]} \inf_{\mathsf{A} \in \mathcal{A}(\mathsf{O})} \inf \left\{ T \in \mathbb{N} \bigg| \mathbb{E} \left[ f(x_T)- \inf_{x \in \mathbb{R}^n} f(x) \right] \leq \epsilon \right\}
\end{equation}
where we omit the dependence of $x_T$ on $P$ and $\mathcal{A}$ in the above expressions.

\subsection{Notations}

Throughout this paper we let $x_0$ be the starting point of all the algorithms we consider, $x^*$ be the global minima of function $f$, and let $R, \Delta$ be upper bounds of the quantities $\|x_0-x^*\|$ and $f(x_0)-f(x^*)$ respectively. We use $\mathcal{O}$ to hide numerical constants and $\widetilde{\mathcal{O}}$ to hide $\log$ terms. 

The following two function classes appear regularly in the main paper:
\begin{equation}
\notag
    \mathcal{F}_{\mathtt{c}} \left( \gamma , R \right) = \left\{ f:\mathbb{R}^n \to \mathbb{R} : f \text{ is } L\text{-smooth and } \gamma \text{-quasar convex, and }\|x_0-x^*\| \leq R \right\}
\end{equation}
\begin{equation}
\notag
    \mathcal{F}_{\mathtt{sc}} \left( \gamma, \mu , R \right) = \left\{ f:\mathbb{R}^n \to \mathbb{R} : f \text{ is } L\text{-smooth and } (\gamma, \mu )\text{-quasar strongly-convex, and } \|x_0-x^*\| \leq R \right\}
\end{equation}

\section{Discussion of Theorem \ref{convex_subopt}}
\label{comparison}

In \citet{gower2020sgd}, the authors prove the following result:

\begin{theorem}
\label{gower}
( ~\cite[Theorem 4.1]{gower2020sgd} ) With appropriately chosen step sizes, SGD for finding $\epsilon$-optimal point of $L$-smooth, $\gamma$-quasar-convex functions has complexity $\mathcal{O}\left( \frac{R^2+c^2\sigma^2}{c\sqrt{T}} \right)$, where $c \in \left( 0,\frac{\gamma}{L} \right)$.
\end{theorem}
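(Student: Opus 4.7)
The plan is to follow the standard Lyapunov-style analysis for SGD, with the key modification of using $\gamma$-quasar-convexity in place of ordinary convexity. I would begin by expanding the squared distance to the optimum after one SGD step,
\begin{equation}
\|x_{t+1}-x^*\|^2 = \|x_t-x^*\|^2 - 2\alpha \langle g_t, x_t-x^*\rangle + \alpha^2 \|g_t\|^2,
\end{equation}
where $g_t$ is the stochastic gradient at $x_t$. Taking conditional expectation and using both the unbiasedness $\mathbb{E}g_t=\nabla f(x_t)$ and the variance bound gives
\begin{equation}
\mathbb{E}\|x_{t+1}-x^*\|^2 \leq \mathbb{E}\|x_t-x^*\|^2 - 2\alpha\, \mathbb{E}\langle \nabla f(x_t), x_t-x^*\rangle + \alpha^2\bigl(\mathbb{E}\|\nabla f(x_t)\|^2+\sigma^2\bigr).
\end{equation}

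Next I would insert the two structural assumptions. By $\gamma$-quasar-convexity, $\langle \nabla f(x_t), x_t-x^*\rangle \geq \gamma\bigl(f(x_t)-f(x^*)\bigr)$; by $L$-smoothness together with the global minimality of $x^*$, $\|\nabla f(x_t)\|^2 \leq 2L\bigl(f(x_t)-f(x^*)\bigr)$. Substituting both yields the central one-step recursion
\begin{equation}
\mathbb{E}\|x_{t+1}-x^*\|^2 \leq \mathbb{E}\|x_t-x^*\|^2 - 2\alpha(\gamma-\alpha L)\, \mathbb{E}\bigl[f(x_t)-f(x^*)\bigr] + \alpha^2\sigma^2.
\end{equation}
This is where the constraint $\alpha<\gamma/L$, equivalently $c<\gamma/L$ once we set $\alpha=c/\sqrt{T}$, becomes essential: it is what guarantees the coefficient in front of the suboptimality gap is strictly positive and the analysis actually yields descent on average.

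I would then telescope from $t=0$ to $T-1$, discard the nonnegative terminal distance term, divide through by $2T\alpha(\gamma-\alpha L)$, and obtain
\begin{equation}
\frac{1}{T}\sum_{t=0}^{T-1} \mathbb{E}\bigl[f(x_t)-f(x^*)\bigr] \leq \frac{R^2}{2T\alpha(\gamma-\alpha L)} + \frac{\alpha\sigma^2}{2(\gamma-\alpha L)}.
\end{equation}
Plugging in $\alpha=c/\sqrt{T}$ and using $\gamma-cL/\sqrt{T}=\Theta(\gamma)$ for $T$ large enough gives the stated rate $\mathcal{O}\bigl((R^2+c^2\sigma^2)/(c\sqrt{T})\bigr)$, with the hidden constants absorbing the dependence on $\gamma$.

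The main obstacle, and the whole point of this appendix, is the rigid coupling between step size and quasar-convexity parameter imposed by $c<\gamma/L$: this is not a proof difficulty so much as a tightness issue. A generic choice such as $c=\gamma/(2L)$ makes the leading $R^2/(c\sqrt{T})$ term scale like $LR^2/(\gamma\sqrt{T})$, which is markedly worse than the $LR^2/(\gamma T)$ behaviour recovered by Theorem \ref{convex_subopt}. Any refinement along the lines sketched in the remark after Theorem \ref{convex_subopt} would tune $c$ as a function of $R$, $\sigma$, $L$ and $T$ in two regimes, splitting on whether the noise term $R\sigma/(\gamma\sqrt{T})$ or the smoothness term $LR^2/(\gamma T)$ dominates; this is precisely the two-regime choice of $\alpha$ made in our proof of Theorem \ref{convex_subopt} and is what allows matching the dominant terms without loss.
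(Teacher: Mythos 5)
Your reconstruction is correct, and it is exactly the argument the paper has in mind: the theorem is a citation of Gower et al., and the paper's Appendix B simply quotes the key intermediate inequality
\begin{equation}
\mathbb{E}\left[ f(\widetilde{x})-f(x^*) \right] \leq \frac{R^2}{2T\alpha (\gamma-L\alpha)} + \frac{\alpha\sigma^2}{\gamma -L\alpha}
\notag
\end{equation}
that your one-step recursion (squared-distance expansion, quasar-convexity giving $\langle \nabla f(x_t), x_t-x^*\rangle \geq \gamma(f(x_t)-f(x^*))$, and the smoothness bound $\|\nabla f(x_t)\|^2 \leq 2L(f(x_t)-f(x^*))$, telescoped and rearranged) derives from scratch; your variance term even carries a factor $\tfrac12$ tighter than the quoted form, a harmless constant. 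Your closing paragraph also matches the paper's point in that appendix: the fixed choice $\alpha = c/\sqrt{T}$ with $c<\gamma/L$ is suboptimal, and a two-regime tuning of $\alpha$ recovers the sharper $LR^2/(\gamma T)$ dependence of Theorem \ref{convex_subopt}.
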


While their final convergence guarantee seems very different with ours, in their proof they actually show that, if we run SGD with step size $\alpha_1,\alpha_2,\cdots$, then we have
\begin{equation}
    \mathbb{E}\left[ f(\widetilde{x})-f(x^*) \right] \leq \frac{R^2}{2\sum_{t=0}^{T-1}\alpha_t \left( \gamma - L\alpha_t \right)} + \frac{\sigma^2\sum_{t=0}^{T-1}\alpha_t^2}{\sum_{t=0}^{T-1}\alpha_t \left( \gamma - L\alpha_t \right)}
\end{equation}
If we choose constant step size $\alpha$, the above becomes
\begin{equation}
\label{gower}
    \mathbb{E}\left[ f(\widetilde{x})-f(x^*) \right] \leq \frac{R^2}{2T\alpha (\gamma-L\alpha)} + \frac{\alpha\sigma^2}{\gamma -L\alpha}
\end{equation}
When $\alpha < \frac{\gamma}{2L}$ the above inequality is the same as \eqref{sum_Delta}, and we can proceed to select an optimal $\alpha$ to minimize \eqref{gower}.However, in \citet{gower2020sgd} the choice of $\alpha$ is not optimal.
\\

Another interesting feature of our proof of Theorem \ref{convex_subopt} is that it does not use the assumption that $x^*$ is the global minima. We hope that this feature can be used to design more efficient algorithms for making gradient small. Indeed, suppose that $f(x)$ is a $\gamma$-quasar convex, then for any point $x_0$, the function $g(x)=f(x)+\frac{\mu}{2}\|x_0-x\|^2$ satisfies
\begin{equation}
    g(x^*) \geq g(x)+\frac{1}{\gamma}\left\langle \nabla g(x), x^*-x \right\rangle + \frac{\mu}{2}\|x-x^*\|^2
\end{equation}
but $x^*$ may no longer be a global minima. Theorem \ref{convex_subopt} can be applied to establish convergence of $g$, however, we are unable to control the term $\|x_0-x\|^2$ during optimization process. A detailed study of possible solutions to this dilemma is left for future work.
\end{document}